\title[A multi-point maximum principle for Harnack inequalities]{A multi-point maximum principle to prove global Harnack inequalities for Schrödinger operators}
\author{Ben Andrews, Daniel Hauer and Jessica Slegers}
\date{\today}
\address{Ben Andrews, Mathematical Sciences Institute, Australian National University, Canberra, ACT, 2006, Australia }
\email{Ben.Andrews@anu.edu.au}
\address{Daniel Hauer, Brandenburg University of Technology Cottbus–Senftenberg, Platz der Deutschen Einheit 1, 03046 Cottbus, Germany; School of Mathematics and Statistics, The University of Sydney, Sydney, NSW, 2006, Australia }
\email{daniel.hauer@b-tu.de}
\address{
    Jessica Slegers, School of Mathematics and Statistics, The University of Sydney, Sydney, NSW, 2006, Australia; Brandenburg University of Technology Cottbus–\linebreak Senftenberg, Platz der Deutschen Einheit 1, 03046 Cottbus, Germany}
\email{jessica.slegers@sydney.edu.au}
\newtheoremstyle{theorem}{}{}{\itshape}{}{\bfseries}{.}{ }{}
\theoremstyle{theorem}
\newtheorem{theorem}{Theorem}[section]
\newtheorem{lemma}[theorem]{Lemma}
\newtheorem{corollary}[theorem]{Corollary}
\newtheoremstyle{definition}{}{}{}{}{\bfseries}{.}{ }{}
\theoremstyle{definition}
\newtheorem{definition}{Definition}[section]
\newtheorem{remark}{Remark}[section]
\newtheorem{example}{Example}[section]
\numberwithin{equation}{section} 
\newcommand{\R}{\mathbb{R}}
\newcommand{\C}{\mathcal{C}}
\newcommand{\dd}{\ \mathrm{d}}
\DeclareMathOperator*{\Ric}{Ric}
\thanks{The first author's research was supported by the Laureate Grant FL150100126 and Discovery Projects Grant DP250103808 of the Australian Research Council. The second author's research was supported by the Australian Research Council grant DP220100067. The third author's research was supported by an Australian Government Research Training Program (RTP) Scholarship. The authors are grateful for the support.}
\keywords{Harnack inequality, heat equation, Li-Yau estimate, maximum principle, multi-point maximum principle, Schrödinger operator, Ornstein-Uhlenbeck operator with potential}
\begin{document}

\begin{abstract}
In this article, we introduce a new methodology to prove global parabolic Harnack inequalities on Riemannian manifolds. We focus on presenting a new proof of the global pointwise Harnack inequality satisfied by positive solutions of the linear Schrödinger equation on a Riemannian manifold $M$ with nonnegative Ricci curvature, where the potential term $V$ is bounded from below. Our approach is based on a multi-point maximum principle argument. Standard proofs of this result (see, for instance, Li-Yau [Acta Math, 1986]) rely on first establishing a gradient estimate. This requires the solution to be at least $C^4$ on $M$. We instead prove the Harnack inequality directly, which has the advantage of avoiding higher-order derivatives of the solution in the proof, enabling us to assume it is only $C^2$ on $M$. In the particular case that $V$ is the quadratic potential $V(x)=|x|^2$ and $M$ is the Euclidean space $\mathbb{R}^d$, we prove a new Harnack inequality with sharper constants. Finally, we treat positive solutions of the Schrödinger equation with a gradient drift term, including applications to the Ornstein-Uhlenbeck operator $\Delta - x\cdot \nabla$ with quadratic potential in $\mathbb{R}^d$.
\end{abstract}

\maketitle

\section{Introduction and main results}
An important milestone in the study of the heat equation 
\begin{equation}\label{eq:schroedinger}
        \partial_t u = \Delta u - V u \qquad \text{in $M\times (0,\infty)$}
    \end{equation}
governed by the Schrödinger operator $-\Delta + V$ on a Riemannian manifold $M$ was the result due to Li and Yau \cite{LiYau1986}, who demonstrated that every positive solution $u$ of \eqref{eq:schroedinger} on a complete Riemannian manifold $M$ with nonnegative Ricci curvature satisfies the global Harnack inequality
\begin{equation}\label{eq:li-yau-harnack}
      u(x,t) \geq u(y,s) \left(\frac{s}{t} \right)^{d/2} e^{-C_2(\theta^{2/3}+C_1^{1/2})(t-s)-\omega(x,y;t,s)}  
    \end{equation}
for all $x,y \in M$ and $0<s<t$. Here, $V=V(x,t) \in C^{2,1}(M \times (0,\infty))$, the constant $C_1$ is such that $\Delta V \leq C_1$ on $M \times (0,\infty)$, $C_2$ is a constant depending on the dimension $d$ of the manifold $M$, $\theta$ describes the growth of $\nabla V$, and 
\[
\omega(x,y;t,s):= \inf_{\gamma \in \Gamma_{x,y}} \frac{1}{4(t-s)}\int_0 ^1 |\dot{\gamma}|^2 \dd \tau + (t-s) \int_0^1 V(\gamma(\tau),(1-\tau)s + \tau t) \dd \tau,
\]
where $\Gamma_{x,y}$ denotes the set of curves $\gamma:[0,1] \to M$ with $\gamma(0)=y$ and $\gamma(1)=x$. The Harnack inequality \eqref{eq:li-yau-harnack} has several important consequences, for example, it implies sharp bounds on the heat kernel (see \cite{LiYau1986}), Hölder continuity of solutions of \eqref{eq:schroedinger}, and eigenvalue estimates (see \cite{Davies-Heat-Kernels}). 

In this paper, we are mainly interested in providing a new methodology of the proof of this result. The main idea of Li and Yau is to take $v=\log u$ and apply the Laplace operator to the differential equation 
\[
\partial_t v = \Delta v + |\nabla v|^2
\]
solved by $v$. By applying the maximum principle, a local gradient estimate is first obtained on a ball of radius $R$. Taking $R \to \infty$ then produces the gradient estimate 
\begin{equation}\label{eq:li-yau-diff-harnack}
    -\Delta (\log u) = \frac{|\nabla u|^2}{u^2} - \frac{\partial_t u}{u}-V \leq \frac{d}{2t}  + C_2 \theta^{2/3} + \left(\frac{d}{2} C_1\right)^{1/2}
\end{equation}
holding globally on $M \times (0,\infty)$. The inequality \eqref{eq:li-yau-diff-harnack}, often referred to as the \emph{Li-Yau estimate}, describes as the \emph{differential form of the Harnack inequality}, meaning that performing space-time integration on \eqref{eq:li-yau-diff-harnack} yields the Harnack inequality \eqref{eq:li-yau-harnack}. We note that this procedure involves the fourth-order derivatives of the solution $u$.

The results obtained in \cite{LiYau1986} have been improved by various authors using similar techniques, especially for the heat equation ($V\equiv 0$) (see, for instance, \cite{Bakry-2017, Bakry-Qian-1999, Davies-Heat-Kernels, Hamilton-1993, Li-Xu}), but also in the general potential case \cite{li-1991, Ruan, Yau-1994}. The approach by Li and Yau has since been used extensively to prove Harnack inequalities in several other contexts, including but not limited to operators of porous medium \cite{Aronson-Benilan-1979, Crandall-Pierre, Huang-Huang-Li, Lu-Ni-Vazquez-Villani, Ma-Li, Yau-1994}, $p$-Laplace \cite{Auchmuty-Bao-1994, Esteban-Vasquez-1990, Kotschwar-Ni} or doubly nonlinear type \cite{Esteban-Vazquez-1988, Wang-Chen-doubly-nonlinear} and involving lower-order terms \cite{Ma-Zhao-Song, Wang-2017, Yau-1995}, as well as geometric flows \cite{Chow-Harnack-Gauss, Chow-Harnack-Yamabe, Hamilton-Ricci-Harnack, Hamilton-Mean-Harnack}. 

We acknowledge that besides the argument by Li and Yau \cite{LiYau1986}, other methods have been established in the literature to prove parabolic Harnack inequalities. One main standard approach is Moser iteration, which was originally applied to derive Harnack inequalities for positive solutions of parabolic problems involving divergence-form operators on the Euclidean space $\R^d$ (see \cite{Moser1964}). This technique has since been adapted under suitable conditions to the settings of manifolds \cite{Grigoryan-1991, Saloff-Coste-1992} and metric measure spaces \cite{Kinnunen-metric-measure}, and more recently to nonlocal equations (see, for instance, \cite{Ding-2021, Garain-Kinnunen-2023, Kassmann-Weidner-2024-2, Kassmann-Weidner-2024, Strömqvist}). While Moser iteration remains an important tool to prove Harnack inequalities in various contexts, one limitation is that it typically does not produce sharp estimates. Other techniques available to prove parabolic Harnack inequalities include, but are not limited to, the original heat-potential based approach of Hadamard \cite{Hadamard1954} and Pini \cite{Pini1954}, the probabilistic approach employed by Krylov and Safonov \cite{Krylov-Safonov-1981} for non-divergence form operators, and the method of intrinsic scaling developed by DiBenedetto \cite{DiBenedetto-Harnack-Book} to treat degenerate and singular equations. For a more thorough introduction to the topic, one may refer to the survey paper of Kassmann \cite{Kassmann-Harnack-survey}.

In this article, we aim to introduce a novel method to prove parabolic Harnack inequalities, which sharpens the constants in \eqref{eq:li-yau-harnack}. Our approach is based on a multi-point maximum principle argument and allows us to directly prove Harnack inequalities. The differential form can be recovered afterwards using space-time differentiation (see Section \ref{sec:diff-harnack}). As a consequence, we only require the solution $u$ to be $C^2$ on $M$. This is advantageous, because in order to proceed via the approach by Li and Yau \cite{LiYau1986}, one requires $u$ to be at least $C^4$ on $M$. While this is not an obstacle in the linear case, since solutions of \eqref{eq:schroedinger} are generally smooth, this can lead to complications when studying solutions of nonlinear equations.

We first consider positive solutions of the Neumann problem associated to the Schrödinger equation \eqref{eq:schroedinger} on compact manifolds, possibly with boundary, and later obtain a result on complete manifolds $M$ by approximating $M$ by an increasing sequence of compact domains. While we restrict the scope of this article to linear parabolic equations, we note that our methods can also be applied to derive Harnack inequalities associated to doubly nonlinear equations, such as the porous medium equation and heat equation corresponding to the $p$-Laplace operator. The details will appear in the doctoral thesis of the third author. In addition, the methodology can also be used to derive local Harnack inequalities for the aforementioned equations.

\bigskip

Our main theorems concern sufficient conditions for a global Harnack inequality to be satisfied by positive solutions $u$ of 
\begin{equation}\tag{\ref{eq:schroedinger}}
    \begin{alignedat}{2}
       \partial_t u &= \Delta u -Vu && \qquad \text{in $M \times (0,\infty)$},
    \end{alignedat}
\end{equation}
where the manifold $M$ is either assumed to be complete or compact with or without boundary. In particular, if $V$ is bounded from below, the problem \eqref{eq:schroedinger} admits a positive solution $u$ for every positive initial datum $u_0$, for instance, $u_0 \in L^2(M)$ (see, for example, \cite{kato}).

A key role is played in the proof by the energy functional $E$ defined by 
\begin{equation}\label{eq:definition-of-E}
E[\gamma;t,s]:= \frac{1}{4(t-s)}\int_0 ^1 |\dot{\gamma}|^2 \dd \tau + (t-s) \int_0^1 V(\gamma(\tau)) \dd \tau   
\end{equation} 
for every $C^1$ curve $\gamma:[0,1] \to M$. We are particularly interested in the minimum value of $E[\gamma;t,s]$ over the set \[\Gamma_{x,y}:= \{ \gamma \in C^1([0,1],M) \mid \gamma(0)=y, \gamma(1)=x \}\] for every $x,y \in M$ and $(t,s)$ in the set \[S:= \{ (t,s) \in \R ^2 \mid 0 < s < t\}.\] We later denote this minimum value by $\omega(x,y;t,s)$. We note that the energy defined in \eqref{eq:definition-of-E} has already been studied extensively, especially by physicists (see, for example, \cite{Carmona-Simon}), who refer to the function $\omega$ as the \emph{Agmon metric}. In particular, it is well-known that energies of the type \eqref{eq:definition-of-E} do indeed attain a minimum over $\Gamma_{x,y}$ (see \cite{Lemaire-1977}).

\begin{definition}
A curve $\gamma_0 \in \Gamma_{x,y}$ is called a \emph{$V$-geodesic} if there exist $0<s<t$ such that \[
E[\gamma_0;t,s] \leq E[\gamma;t,s].
\] for all $\gamma \in \Gamma_{x,y}$. In addition, we say that a compact manifold $M$ is \emph{$V$-convex} or has a \emph{$V$-convex boundary} if 
\begin{equation}\label{eq:V-convexity-manifold}
    h(\xi,\xi) + g(\nabla V(x), \nu(x)) > 0
\end{equation}
holds for all $x \in \partial M$ and $\xi \in T_x \partial M \setminus \{0\}$, where $h$ denotes the second fundamental form on $\partial M$ and $\nu(x)$ is the outward pointing unit normal vector to the boundary at $x \in \partial M$. 
\end{definition}

\begin{remark}\label{rem:V-convex}
\begin{enumerate}
\item[]
\item The condition \eqref{eq:V-convexity-manifold} implies the nonnegativity of the second fundamental form and therefore every $V$-convex domain is necessarily convex. In addition, \eqref{eq:V-convexity-manifold} implies that $\nabla V \cdot \nu \geq 0$ on $\partial M$, which is a condition previously assumed by Yau \cite{Yau-1995}.
\item By applying a maximum principle argument to analyse the distance of a $V$-geodesic from the boundary $\partial M$, one can show that \eqref{eq:V-convexity-manifold} is sufficient to ensure that all $V$-geodesics connecting two points $x,y \in M$ are contained in $M$, and that the endpoints of a $V$-geodesic cannot intersect the boundary tangentially.
\end{enumerate} 
\end{remark}

\begin{definition}
A manifold $M$ is called \emph{$V$-approximable} if there exists an increasing sequence $(M_n)_{n \geq 1}$ of $V$-convex regions $M_n \subset M$ such that $\bigcup_{n \geq 1} M_n = M$. 
\end{definition}

For example, if $V$ is geodesically convex and has a minimiser at $x \in M$, then $M$ is $V$-approximable via geodesic balls centred at $x$ and of increasing radius.

\bigskip

Our main result is as follows.

\begin{theorem}\label{thm:Manifold-Schroedinger-full}
Let $(M,g)$ be a complete Riemannian manifold without boundary and nonnegative Ricci curvature. Let $V \in C^2(M)$ be bounded from below and such that $M$ is $V$-approximable. Suppose there exist strictly increasing, differentiable functions $\beta:[0,\infty) \to [0,\infty)$ satisfying $\beta(0)=0$ and $A:[0,\infty) \to [0,\infty)$ such that 
\begin{equation}\label{eq:second-order-assumption-manifold}
\begin{split}
   (t-s)\int_0^1 \frac{d}{2}A'(\tau t + (1-\tau)s)^2 &+ A(\tau t + (1-\tau)s)^2 \Delta V(\gamma_0) \dd \tau \\&\leq A(t)^2 \frac{\beta'(t)}{\beta(t)} - A(s)^2 \frac{\beta'(s)}{\beta(s)}  
\end{split}
\end{equation}
holds for all $V$-geodesics $\gamma_0$ and $0<s<t$. 
Then every positive solution $u$ of \eqref{eq:schroedinger} satisfies
\begin{equation}\label{eq:heat-harnack-manifold}
u(x,t) \geq u(y,s) \frac{\beta(s)}{\beta(t)} e^{-\omega(x,y;t,s)}.
\end{equation}
for all $x,y \in M$ and $0 < s < t$, where $\omega:M \times M \times S$ is given by
\begin{equation}\label{eq:omega-def}
    \omega(x,y;t,s):= \min_{\gamma \in \Gamma_{x,y}} \frac{1}{4(t-s)}\int_0 ^1 |\dot{\gamma}|^2 \dd \tau + (t-s) \int_0^1 V(\gamma(\tau)) \dd \tau.
\end{equation}
\end{theorem}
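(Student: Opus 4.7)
My plan is a perturbed multi-point maximum principle for the ``log-Harnack'' functional
\[
Z(x,y,t,s):=\log u(x,t)-\log u(y,s)+\log\beta(t)-\log\beta(s)+\omega(x,y;t,s),
\]
whose nonnegativity on $M\times M\times S$ is equivalent to \eqref{eq:heat-harnack-manifold}. First I would reduce to the compact case via $V$-approximability: choose an exhausting sequence of compact $V$-convex domains $M_n\uparrow M$, solve the Neumann problem for \eqref{eq:schroedinger} on each $M_n$ with initial datum $u(\cdot,0)|_{M_n}$, prove the Harnack inequality for the approximants, and pass to the limit via local uniform convergence. Everything that follows is on a single compact $V$-convex $N$ with Neumann data for $u$; set $v:=\log u$ and, to break the degeneracy at an extremum, work with $Z_\epsilon:=Z+\epsilon t$ for small $\epsilon>0$, aiming to show $Z_\epsilon\geq 0$ and then send $\epsilon\to 0$.

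The function $Z_\epsilon$ tends to $+\infty$ as $s\to 0^+$ (via $-\log\beta(s)$), as $t\to s^+$ with $x\neq y$ (the kinetic part of $\omega$ blows up), and as $t\to\infty$ (via $\epsilon t$); so if it is ever negative it attains a minimum at some interior $(x^*,y^*,t^*,s^*)$. The spatial boundary of $N\times N$ is ruled out using Remark~\ref{rem:V-convex}: the Neumann condition kills $g(\nabla_x v,\nu)$ at $x^*$, so the minimum condition reduces to $g(\nabla_x\omega,\nu)\leq 0$; since $\nabla_x\omega=\dot\gamma_0(1)/(2(t^*-s^*))$ for the realising $V$-geodesic $\gamma_0$ (which by Remark~\ref{rem:V-convex} stays in $N$), this would force $\dot\gamma_0(1)$ to be tangent to $\partial N$, excluded by $V$-convexity; likewise for $y^*$.

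At such an interior minimum, the first integral $|\dot\gamma_0(\tau)|^2/(4(t^*-s^*)^2)-V(\gamma_0(\tau))\equiv c$ along a $V$-geodesic and the envelope identities $\partial_t\omega=-c=-\partial_s\omega$ combine with $\nabla_{(x,y)}Z_\epsilon=0$ to give $|\nabla v(x^*,t^*)|^2-V(x^*)=c=|\nabla v(y^*,s^*)|^2-V(y^*)$. The time conditions $\partial_t Z_\epsilon=\partial_s Z_\epsilon=0$ together with $\partial_t v=\Delta v+|\nabla v|^2-V$ then collapse to $\Delta v(x^*,t^*)=-\beta'(t^*)/\beta(t^*)-\epsilon$ and $\Delta v(y^*,s^*)=-\beta'(s^*)/\beta(s^*)$. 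For the second-order information, let $\{e_i\}_{i=1}^d$ be a parallel orthonormal frame along $\gamma_0$ and test $\mathrm{Hess}_{(x,y)}Z_\epsilon\geq 0$ against the pairs $(A(t^*)e_i(1),A(s^*)e_i(0))$---the endpoint values of $J_i(\tau):=A((1-\tau)s^*+\tau t^*)e_i(\tau)$---and sum over $i$. The envelope inequality $\omega\leq E$, with first-order equality at $\gamma_0$, dominates the endpoint Hessian of $\omega$ by $\sum_i\delta^2 E[\gamma_0;J_i]$; a direct computation invoking $\Ric\geq 0$ bounds this in turn by the left-hand side of \eqref{eq:second-order-assumption-manifold}, which the hypothesis bounds by $A(t^*)^2\beta'(t^*)/\beta(t^*)-A(s^*)^2\beta'(s^*)/\beta(s^*)$. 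Substituting the above expressions for $\Delta v$ makes every term cancel except $-A(t^*)^2\epsilon$, yielding the contradiction $-A(t^*)^2\epsilon\geq 0$. The hardest step is this second-order one---verifying that the endpoint Hessian of $\omega$ is dominated by $\sum_i\delta^2 E[\gamma_0;J_i]$ via the envelope inequality, and that after $\Ric\geq 0$ this matches exactly the left-hand side of \eqref{eq:second-order-assumption-manifold}; the spatial boundary exclusion and the $M_n\uparrow M$ approximation are, by comparison, standard.
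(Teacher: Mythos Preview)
Your outline is essentially the paper's proof: reduce to the compact Neumann problem via $V$-approximability, run a multi-point maximum principle on a perturbation of $Z$, bypass the possible non-smoothness of $\omega$ by the envelope device $\omega\leq E$ (the paper formalises this by introducing $\tilde Z(\gamma;t,s)$ defined through $E$ and analysing it under variations), and extract the contradiction from the second variation with the parallel test fields $J_i(\tau)=A((1-\tau)s^*+\tau t^*)\,e_i(\tau)$. Two steps, however, do not go through as written.

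First, the assertion that $Z_\epsilon\to+\infty$ as $t\to\infty$ is unsupported and in general false: on a compact $N$ with $V\geq 0$ and positive principal Neumann eigenvalue $\lambda_0$ one has $v(x,t)\sim-\lambda_0 t$, which the additive $\epsilon t$ need not dominate, nor need $\log\beta(t)+\omega$. You therefore cannot locate a global interior minimum on $N\times N\times S$, and the equalities $\partial_t Z_\epsilon=\partial_s Z_\epsilon=0$ are unavailable. The paper avoids any large-time analysis by perturbing instead with $\tfrac{\varepsilon}{2}(t-s)^2+\varepsilon$ and taking the \emph{first} $\rho_0=t_0^2+s_0^2$ at which $Z$ vanishes; from $Z>0$ on $\{\rho<\rho_0\}$ one gets only the one-sided conditions $\partial_t Z\leq 0$, $\partial_s Z\leq 0$, which already yield $\Delta v(x_0,t_0)\leq -\beta'(t_0)/\beta(t_0)-\varepsilon(t_0-s_0)$ and the symmetric bound at $(y_0,s_0)$, and these suffice for the contradiction $0\leq -\varepsilon(t_0-s_0)\bigl(A(t_0)^2-A(s_0)^2\bigr)$.

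Second, the spatial boundary is not fully ``ruled out''. Your reasoning forces $\dot\gamma_0(1)$ to be tangent to $\partial N$ when $x^*\in\partial N$, but $V$-convexity (Remark~\ref{rem:V-convex}) excludes tangential contact only for \emph{nonzero} endpoint velocity; the possibility $\dot\gamma_0(1)=0$ remains for a $V$-geodesic. The paper handles this case: once the endpoint velocity vanishes, the Neumann condition (normal part) together with the tangential first-order minimum condition force $\nabla v=0$ there as well, so the identity $\nabla v=-\dot\gamma_0/(2(t_0-s_0))$ is recovered at the boundary point and the interior computation can be repeated verbatim.
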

We note that the function $\omega$ defined in \eqref{eq:omega-def} coincides with the function studied by Li and Yau in \cite{LiYau1986} and we revisit it further below.

In order to prove this theorem, we approximate the positive solution $u$ of \eqref{eq:schroedinger} by a sequence of positive solutions $u_n$ of the Neumann problem \eqref{eq:schroe-neumann-manifold} defined below on an increasing sequence of compact $V$-convex submanifolds. In particular, we require the following result regarding positive solutions $u$ of the Neumann problem.

\begin{theorem}\label{thm:Manifold-Schroedinger}
Let $(M,g)$ be a compact Riemannian manifold with nonnegative Ricci curvature and (possibly empty) boundary $\partial M$. Let $V \in C^2(M)$ be bounded from below and such that $\partial M$ is $V$-convex. In addition, suppose that \eqref{eq:second-order-assumption-manifold} holds for all $V$-geodesics $\gamma_0$. Then every positive solution $u$ of  
\begin{equation}\label{eq:schroe-neumann-manifold}
   \begin{cases}
   \begin{alignedat}{2}
       \partial_t u &= \Delta u -V u&& \qquad \text{in $M \times (0,\infty)$,} \\
        \nabla u \cdot \nu &= 0 && \qquad \text{on $\partial M \times (0,\infty)$}
   \end{alignedat}
\end{cases} 
\end{equation}
satisfies the Harnack inequality \eqref{eq:heat-harnack-manifold} for all $x,y \in M$ and $0<s<t$.
\end{theorem}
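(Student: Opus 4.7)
The strategy is a multi-point maximum principle applied to
\[
\phi(x,y,t,s) := \log\frac{u(x,t)}{u(y,s)} + \log\frac{\beta(t)}{\beta(s)} + \omega(x,y;t,s)
\]
on $M\times M\times S$; the target \eqref{eq:heat-harnack-manifold} is equivalent to $\phi\geq 0$, and $\phi\to 0$ along the diagonal $s\to t$ with $x\to y$, so the inequality is sharp there. I would argue by contradiction: if $\phi(x^{*},y^{*},t^{*},s^{*})<0$ at some point, replace $\phi$ by $\phi_{\eta}:=\phi+\eta(t+s)$ for $\eta>0$ small, and restrict to a compact time-window $\{\delta\leq s\leq t\leq T\}$ containing $(t^{*},s^{*})$. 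The boundary behaviour of $\phi_{\eta}$ (namely $-\log\beta(s)\to+\infty$ as $s\to 0^{+}$, $\omega\to+\infty$ as $s\to t^{-}$ with $x\neq y$, $\phi=0$ on $\{s=t,\,x=y\}$, together with compactness of $M$) ensures a negative interior minimum $(x_{0},y_{0},t_{0},s_{0})$ for $\delta$ small and $T$ large. A minimum on $\partial M$ in either spatial slot is ruled out by combining the Neumann condition $\nabla u\cdot\nu=0$ with the first-order identification of $\nabla\log u$ as a $V$-geodesic velocity derived below; transversality of $V$-geodesics at the boundary, guaranteed by $V$-convexity via Remark \ref{rem:V-convex}(2), then forces the non-admissible normal derivative of $\phi_{\eta}$ to be strictly positive, a contradiction.

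At the interior minimum, $\nabla_{x}\phi_{\eta}=\nabla_{y}\phi_{\eta}=0$ identifies
\[
\nabla\log u(x_{0},t_{0})=-\frac{\dot\gamma_{0}(1)}{2(t_{0}-s_{0})},\qquad \nabla\log u(y_{0},s_{0})=-\frac{\dot\gamma_{0}(0)}{2(t_{0}-s_{0})},
\]
where $\gamma_{0}\in\Gamma_{y_{0},x_{0}}$ realises $\omega(x_{0},y_{0};t_{0},s_{0})$. The Euler--Lagrange equation conserves the Hamiltonian $H(\tau):=|\dot\gamma_{0}(\tau)|^{2}/(4(t_{0}-s_{0})^{2})-V(\gamma_{0}(\tau))$, and a direct computation gives $\partial_{t}\omega=-H$, so that $|\nabla\log u|^{2}=-\partial_{t}\omega+V$ at both endpoints. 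Combining this with $\partial_{t}\phi_{\eta}=\partial_{s}\phi_{\eta}=0$, the equation \eqref{eq:schroedinger}, and the identity $\Delta\log u=\Delta u/u-|\nabla\log u|^{2}$, the $V$- and $\partial_{t}\omega$-contributions cancel, producing the clean identities
\[
\Delta\log u(x_{0},t_{0})=-\tfrac{\beta'(t_{0})}{\beta(t_{0})}-\eta,\qquad \Delta\log u(y_{0},s_{0})=-\tfrac{\beta'(s_{0})}{\beta(s_{0})}+\eta.
\]

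For the second-order step I would employ a variation weighted by $A$ along a parallel frame. Let $\{e_{i}(\tau)\}_{i=1}^{d}$ be parallel orthonormal along $\gamma_{0}$ and set
\[
\gamma_{\xi}^{i}(\tau):=\exp_{\gamma_{0}(\tau)}\bigl(\xi\,A(\tau t_{0}+(1-\tau)s_{0})\,e_{i}(\tau)\bigr),
\]
with endpoints $x_{\xi}^{i}:=\exp_{x_{0}}(\xi A(t_{0})e_{i}(1))$ and $y_{\xi}^{i}:=\exp_{y_{0}}(\xi A(s_{0})e_{i}(0))$. Since $\omega(x_{\xi}^{i},y_{\xi}^{i};t_{0},s_{0})\leq E[\gamma_{\xi}^{i};t_{0},s_{0}]$ with equality at $\xi=0$, the auxiliary function
\[
F_{i}(\xi):=\log u(x_{\xi}^{i},t_{0})-\log u(y_{\xi}^{i},s_{0})+\log\tfrac{\beta(t_{0})}{\beta(s_{0})}+E[\gamma_{\xi}^{i};t_{0},s_{0}]
\]
satisfies $F_{i}(\xi)\geq\phi(x_{\xi}^{i},y_{\xi}^{i},t_{0},s_{0})\geq F_{i}(0)$, so $F_{i}''(0)\geq 0$. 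Summing over $i=1,\dots,d$, the classical second variation formula for $E$ with parallel $e_{i}$ contributes the kinetic term $\tfrac{d(t_{0}-s_{0})}{2}\int_{0}^{1}A'(\cdot)^{2}\dd\tau$, a Ricci integrand $-\tfrac{1}{2(t_{0}-s_{0})}\int_{0}^{1}A^{2}\,\Ric(\dot\gamma_{0},\dot\gamma_{0})\dd\tau$ which is non-positive and hence discardable under $\Ric\geq 0$, the potential term $(t_{0}-s_{0})\int_{0}^{1}A^{2}\,\Delta V(\gamma_{0})\dd\tau$, and the Hessian contributions $A(t_{0})^{2}\Delta\log u(x_{0},t_{0})-A(s_{0})^{2}\Delta\log u(y_{0},s_{0})$. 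Substituting the first-order identities rearranges the inequality $\sum_{i}F_{i}''(0)\geq 0$ into
\[
(t_{0}-s_{0})\int_{0}^{1}\!\Bigl[\tfrac{d}{2}A'(\cdot)^{2}+A^{2}\Delta V(\gamma_{0})\Bigr]\dd\tau\ \geq\ A(t_{0})^{2}\tfrac{\beta'(t_{0})}{\beta(t_{0})}-A(s_{0})^{2}\tfrac{\beta'(s_{0})}{\beta(s_{0})}+\eta\bigl[A(t_{0})^{2}+A(s_{0})^{2}\bigr],
\]
which, since $A(t_{0}),A(s_{0})>0$ by strict monotonicity of $A$, strictly violates \eqref{eq:second-order-assumption-manifold} along $\gamma_{0}$ at $(t_{0},s_{0})$.

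The hard part is the first step: producing a negative interior minimum of $\phi_{\eta}$ (since $\phi$ itself lacks any a priori coercivity), and tracking the $\eta$-perturbation through \emph{both} the first- and second-order identities in such a way that the non-strict hypothesis \eqref{eq:second-order-assumption-manifold} is converted into a strict contradiction in the limit. The remaining ingredients---Hamiltonian conservation collapsing $\Delta\log u$ to $-\beta'/\beta$ at the minimum, the weighted parallel variation engineered precisely to reproduce the integrand of \eqref{eq:second-order-assumption-manifold}, and the $V$-convex exclusion of boundary minima via Remark \ref{rem:V-convex}(2)---are comparatively routine once the scaling in \eqref{eq:second-order-assumption-manifold} is read off from the second variation formula.
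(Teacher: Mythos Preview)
Your argument is essentially the paper's: a perturbed two-point function, first-order identification of $\nabla\log u$ with the $V$-geodesic velocity, the Hamilton--Jacobi cancellation yielding $\Delta\log u=-\beta'/\beta\pm(\text{perturbation})$, and the $A$-weighted parallel second variation reproducing the integrand of \eqref{eq:second-order-assumption-manifold}. The paper uses the perturbation $\tfrac{\varepsilon}{2}(t-s)^{2}+\varepsilon$ together with a ``first-touching'' argument along level sets $t^{2}+s^{2}=\rho$ (which yields only the one-sided conditions $\partial_{t}Z\leq 0$, $\partial_{s}Z\leq 0$, hence \emph{inequalities} for $\Delta\log u$), whereas you use $\eta(t+s)$ on a compact time-slab; both choices give a strict contradiction, yours via $\eta[A(t_{0})^{2}+A(s_{0})^{2}]>0$ and theirs via $\varepsilon(t_{0}-s_{0})[A(t_{0})^{2}-A(s_{0})^{2}]>0$.

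Two small imprecisions are worth flagging. First, nothing prevents the minimum of $\phi_{\eta}$ from sitting on the face $t=T$, so you only get $\partial_{t}\phi_{\eta}\leq 0$ there rather than equality; fortunately this inequality still feeds the right sign into the second-variation step, so your contradiction survives unchanged. Second, the spatial-boundary case is not literally ``ruled out'': if $\dot\gamma_{0}(0)=0$ the outward normal derivative of $\phi_{\eta}$ vanishes rather than being strictly positive, and Remark~\ref{rem:V-convex}(2) does not exclude this degenerate tangency. The paper handles this by observing that $\dot\gamma_{0}(0)=0$ together with the Neumann condition forces $\nabla v(y_{0},s_{0})=0=-\dot\gamma_{0}(0)/(2(t_{0}-s_{0}))$, so the first-order identity is recovered and the interior computation proceeds verbatim; you should add the same one-line patch.
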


\begin{remark}
Without loss of generality, we may always assume that the potential $V$ in  \eqref{eq:schroedinger} is nonnegative. To see this, note that if $V$ is merely bounded from below by $-\alpha$ for some $\alpha>0$, applying the transformation $u \mapsto ue^{-\alpha t}$ to the equation \eqref{eq:schroedinger} yields 
    \[
    \partial_t u = \Delta u - (V_\alpha) u
    \]
    with $V_\alpha:= V + \alpha \geq 0$. After obtaining a result for this new, nonnegative potential $V+\alpha$, we can reverse the transformation to give a Harnack inequality for solutions of the original equation.
\end{remark}

After making a change of variables, Theorem \ref{thm:Manifold-Schroedinger-full} immediately implies the following result for positive solutions of the equation \eqref{eq:schroedinger} perturbed by a gradient drift term. 

\begin{corollary}\label{cor:drift-full}
Let $(M,g)$ be a complete Riemannian manifold without boundary and nonnegative Ricci curvature. Let $f \in C^4(M)$, \mbox{$V \in C^2(M)$}, and let $u$ be a positive solution of 
\begin{equation}\label{eq:heat-grad-drift-potential-full}
\partial_t u = \Delta u -2\nabla f \cdot \nabla u - Vu \qquad \text{in $M\times (0,\infty)$}.
\end{equation}
If the conditions of Theorem \ref{thm:Manifold-Schroedinger-full} hold for the potential $\tilde{V}:= |\nabla f|^2 - \Delta f + V$, then $u$ satisfies
    \begin{equation}\label{eq:drift-transformed-harnack-og}
u(x,t) \geq u(y,s) \frac{\beta(s)}{\beta(t)} e^{f(x)-f(y)-\tilde{\omega}(x,y;t,s)}
\end{equation}
for all $x,y \in M$ and $0<s<t$, where 
\[
\tilde{\omega}(x,y;t,s):= \min_{\Gamma_{x,y}} \frac{1}{4(t-s)}\int_0 ^1 |\dot{\gamma}|^2 \dd \tau + (t-s) \int_0^1 \tilde{V}(\gamma(\tau)) \dd \tau.
\]
\end{corollary}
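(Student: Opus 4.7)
The plan is to reduce the drift equation \eqref{eq:heat-grad-drift-potential-full} to the pure Schrödinger form \eqref{eq:schroedinger} via the substitution $\tilde u := u\,e^{-f}$, and then simply quote Theorem \ref{thm:Manifold-Schroedinger-full} for $\tilde u$ with the new potential $\tilde V = |\nabla f|^2 - \Delta f + V$.

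First I would verify the transformation rule. Writing $u = e^{f}\tilde u$, I would compute
\[
\nabla u = e^{f}\bigl(\tilde u\,\nabla f + \nabla \tilde u\bigr),\qquad \Delta u = e^{f}\bigl(\Delta \tilde u + 2\nabla f\cdot\nabla\tilde u + (|\nabla f|^2 + \Delta f)\tilde u\bigr),
\]
and substitute into $\partial_t u = \Delta u - 2\nabla f\cdot\nabla u - Vu$. After cancelling the factor $e^{f}$, the first-order gradient terms cancel in pairs, and the remaining zeroth-order terms combine as $(\Delta f - |\nabla f|^2 - V)\tilde u$, so that
\[
\partial_t \tilde u = \Delta \tilde u - \tilde V\,\tilde u\qquad\text{in } M\times(0,\infty),
\]
with $\tilde V = |\nabla f|^2 - \Delta f + V$. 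Since $f\in C^4(M)$ and $V\in C^2(M)$, we have $\tilde V\in C^2(M)$, and $\tilde u>0$ because $u>0$.

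Next, by hypothesis the potential $\tilde V$ satisfies all assumptions of Theorem \ref{thm:Manifold-Schroedinger-full}, so I may apply that theorem directly to $\tilde u$ to conclude
\[
\tilde u(x,t) \geq \tilde u(y,s)\,\frac{\beta(s)}{\beta(t)}\,e^{-\tilde\omega(x,y;t,s)}
\]
for all $x,y\in M$ and $0<s<t$, where the energy functional in the definition of $\tilde\omega$ is built from $\tilde V$ in exactly the form stated in the corollary. Finally I would undo the substitution: multiplying by $e^{f(x)}$ on the left and using $\tilde u(y,s) = u(y,s)\,e^{-f(y)}$ on the right yields
\[
u(x,t) \geq u(y,s)\,\frac{\beta(s)}{\beta(t)}\,e^{f(x)-f(y)-\tilde\omega(x,y;t,s)},
\]
which is precisely \eqref{eq:drift-transformed-harnack-og}.

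There is no genuine obstacle here: the argument is a direct conjugation of the semigroup, and the only care needed is to check that the weighted Laplacian $\Delta - 2\nabla f\cdot\nabla$ is conjugate to the Schrödinger operator $\Delta - \tilde V$ with the stated $\tilde V$, which is the computation above. All regularity and positivity hypotheses of Theorem \ref{thm:Manifold-Schroedinger-full} are inherited automatically from the assumptions on $u$, $f$, and $V$.
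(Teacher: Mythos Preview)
Your proposal is correct and matches the paper's own proof essentially line for line: the paper also sets $v:=e^{-f}u$, observes that $v$ solves $\partial_t v=\Delta v-\tilde V v$ with $\tilde V=|\nabla f|^2-\Delta f+V$, applies Theorem~\ref{thm:Manifold-Schroedinger-full} to $v$, and then reverses the substitution to obtain \eqref{eq:drift-transformed-harnack-og}. The only difference is that you spell out the Laplacian computation explicitly, which the paper omits.
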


By applying Theorem \ref{thm:Manifold-Schroedinger} instead of Theorem \ref{thm:Manifold-Schroedinger-full}, we obtain a similar result on compact manifolds.

\begin{corollary}\label{cor:drift}
Let $(M,g)$ be a compact Riemannian manifold with nonnegative Ricci curvature and (possibly empty) boundary $\partial M$. Let $f \in C^4(M)$, \mbox{$V \in C^2(M)$}, and let $u$ be a positive solution of 
\begin{equation}\label{eq:heat-grad-drift-potential}
   \begin{cases}
   \begin{alignedat}{2}
   \partial_t u &= \Delta u -2\nabla f \cdot \nabla u - Vu &&\qquad \text{in $M \times (0,\infty)$,} \\
   \nabla u \cdot \nu &= (\nabla f \cdot \nu)u &&\qquad \text{on $\partial M \times (0,\infty)$.}
   \end{alignedat}
\end{cases} 
\end{equation}
If the conditions of Theorem \ref{thm:Manifold-Schroedinger} hold for the potential $\tilde{V}:= |\nabla f|^2 - \Delta f + V$, then $u$ satisfies \eqref{eq:drift-transformed-harnack-og}
for all $x,y \in M$ and $0<s<t$.
\end{corollary}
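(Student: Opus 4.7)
The plan is to reduce Corollary \ref{cor:drift} to Theorem \ref{thm:Manifold-Schroedinger} via the change of variables $w := e^{-f} u$, which absorbs the drift term into a modified potential. Since $f \in C^4(M)$, the function $\tilde V = |\nabla f|^2 - \Delta f + V$ lies in $C^2(M)$, which is the regularity needed to invoke Theorem \ref{thm:Manifold-Schroedinger}; the new function $w$ inherits $C^2$ regularity and strict positivity from $u$ on the compact manifold $M$.

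First, I would compute how the equation \eqref{eq:heat-grad-drift-potential} transforms. Using $u = e^f w$ together with the identities $\partial_t u = e^f \partial_t w$, $\nabla u = e^f(\nabla w + w\nabla f)$, and $\Delta u = e^f(\Delta w + 2\nabla f \cdot \nabla w + w|\nabla f|^2 + w\Delta f)$, substitution into \eqref{eq:heat-grad-drift-potential} cancels the drift contribution $2\nabla f \cdot \nabla w$ and leaves
\[
\partial_t w = \Delta w - \bigl(|\nabla f|^2 - \Delta f + V\bigr) w = \Delta w - \tilde V w \qquad \text{in } M \times (0,\infty).
\]
For the boundary condition, from $\nabla u \cdot \nu = e^f \nabla w \cdot \nu + (\nabla f \cdot \nu) u$, the assumption $\nabla u \cdot \nu = (\nabla f \cdot \nu) u$ on $\partial M \times (0,\infty)$ is equivalent to $\nabla w \cdot \nu = 0$ on $\partial M \times (0,\infty)$.

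Second, under the standing assumption that the hypotheses of Theorem \ref{thm:Manifold-Schroedinger} hold for $\tilde V$ (that is, $\partial M$ is $\tilde V$-convex and the second-order estimate \eqref{eq:second-order-assumption-manifold} holds for $\tilde V$-geodesics), I would apply Theorem \ref{thm:Manifold-Schroedinger} to the positive solution $w$ of the Neumann problem just derived. This yields
\[
w(x,t) \geq w(y,s) \frac{\beta(s)}{\beta(t)} e^{-\tilde{\omega}(x,y;t,s)}
\]
for all $x,y \in M$ and $0 < s < t$, where $\tilde\omega$ is the Agmon-type minimum associated with $\tilde V$.

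Finally, substituting $w = e^{-f} u$ and multiplying through by $e^{f(x)}$ recovers the claimed inequality \eqref{eq:drift-transformed-harnack-og}. I do not foresee a genuine obstacle here; the only point requiring care is the bookkeeping in the transformation and the verification that $f \in C^4$ is exactly what guarantees $\tilde V \in C^2$, so that Theorem \ref{thm:Manifold-Schroedinger} is applicable as a black box.
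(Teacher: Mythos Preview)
Your proposal is correct and follows exactly the paper's approach: the substitution $w=e^{-f}u$ converts the drift problem into the Neumann Schr\"odinger problem \eqref{eq:schroe-neumann-manifold} with potential $\tilde V$, Theorem~\ref{thm:Manifold-Schroedinger} is applied to $w$, and undoing the transformation gives \eqref{eq:drift-transformed-harnack-og}. Your explicit verification of the boundary condition and of the regularity $\tilde V\in C^2(M)$ coming from $f\in C^4(M)$ is a welcome detail that the paper leaves implicit.
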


In the Euclidean case $M=\R^d$, we encounter fewer technical restrictions in the proof, which allows us to generalise the function $\omega$ defined in \eqref{eq:omega-def}. The following theorem is our main result in this case.

\begin{theorem}\label{thm:Euclidean-Schroedinger-full}
Let $V \in C^2(\R^d)$ be bounded from below. Suppose there exists a continuous function $\omega=\omega(x,y;t,s): \R^d \times \R^d \times S \rightarrow \R$, which is twice differentiable in the first and second arguments and differentiable in the third and fourth arguments. Suppose that $\omega(x,y;t,s) \rightarrow \infty$ whenever $t \rightarrow s^+$ for $x\ne y$ and $\lim_{t\to s^+} \omega(x,x;t,s) \geq 0$ for all $x \in \R^d$. In addition, assume that $\omega$ satisfies
\begin{align}
    \partial_t \omega + |\nabla_x\omega|^2 &\geq V(x) \ \ \qquad \text{ in $\R^d \times \R^d \times S$}, \label{eq:schroe-omega-ineq-1}\\
    \partial_s \omega - |\nabla_y\omega|^2 &\geq -V(y) \qquad \text{ in $\R^d \times \R^d \times S$,}\label{eq:schroe-omega-ineq-2}
\end{align}
and that there exist strictly increasing, differentiable functions $\beta:[0,\infty) \to [0,\infty)$ satisfying $\beta(0)=0$ and $A:[0,\infty) \to [0,\infty)$ such that
\begin{equation}
    A(t)^2\Delta_x\omega +A(s)^2\Delta_y \omega + 2A(t)A(s) \sum_{i=1}^{d} \omega_{x_i y_i} \leq A(t)^2 \frac{\beta'(t)}{\beta(t)}-A(s)^2 \frac{\beta'(s)}{\beta(s)}\label{eq:schroe-omega-ineq-3}
\end{equation}
holds in $\R^d \times \R^d \times S$.
Finally, assume there exists an increasing sequence $(\Omega_{n})_{n \geq 1}$ of smooth, bounded domains $\Omega_{n} \subset \R^d$ such that $\bigcup_{n \geq 1} \Omega_{n} = \R^d$ and 
\begin{equation}\label{eq:V-convexity}
    \begin{split}
    \nabla_x \omega \cdot \nu(x) \geq 0, \qquad \text{for all $x \in \partial \Omega_n$, $y \in \overline{\Omega_n}$, and $0<s<t$,}\\
    \nabla_y \omega \cdot \nu(y) \geq 0, \qquad \text{for all $x \in \overline{\Omega_n}$, $y \in \partial \Omega_n$ and $0<s<t$.}
\end{split}
\end{equation}
hold for all $ n \geq 1$. Then, every positive solution $u$ of 
\begin{equation}\label{eq:schroedinger-full-euclidean}
   \begin{alignedat}{2}
       \partial_t u &= \Delta u -V u&& \qquad \text{in $\R^d \times (0,\infty)$}
   \end{alignedat} 
\end{equation} satisfies
\begin{equation}\label{eq:heat-harnack-omega}
u(x,t) \geq u(y,s) \frac{\beta(s)}{\beta(t)} e^{-\omega(x,y;t,s)}
\end{equation}
for all $x,y \in \R^d$ and $0 <s<t$.
\end{theorem}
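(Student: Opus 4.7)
The plan is to deduce Theorem \ref{thm:Euclidean-Schroedinger-full} by localizing to the sequence $(\Omega_n)_{n\geq 1}$ and proving an analogue of Theorem \ref{thm:Manifold-Schroedinger} for the more general $\omega$ allowed here (rather than the specific Agmon minimizer). For each $n\geq 1$, I would let $u_n$ be the unique positive classical solution of the Neumann initial--boundary value problem associated with \eqref{eq:schroedinger-full-euclidean} on $\Omega_n\times(0,\infty)$ with initial datum $u(\cdot,0)|_{\Omega_n}$; this exists by standard parabolic theory since $V$ is bounded from below, and classical parabolic stability estimates yield $u_n\to u$ locally uniformly on $\R^d\times(0,\infty)$. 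It therefore suffices to establish the Harnack inequality for each $u_n$ on $\overline{\Omega_n}\times\overline{\Omega_n}\times S$ and pass to the limit $n\to\infty$.

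For a parameter $\epsilon>0$ I would set $\omega_\epsilon(x,y;t,s):=\omega(x,y;t,s)+\epsilon(t+s)$ and apply a two-point maximum principle to the function
\[
Z(x,y,t,s) := \log u_n(x,t) - \log u_n(y,s) + \log\frac{\beta(t)}{\beta(s)} + \omega_\epsilon(x,y;t,s).
\]
The key advantage of this perturbation is that it upgrades \eqref{eq:schroe-omega-ineq-1} and \eqref{eq:schroe-omega-ineq-2} to \emph{strict} inequalities with slack $\epsilon$, while leaving \eqref{eq:schroe-omega-ineq-3}, \eqref{eq:V-convexity}, and the limiting behaviour of $\omega$ as $t\searrow s$ intact (the added term depends only on $(t,s)$). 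Consequently, once $Z\geq 0$ is established for every $\epsilon>0$, letting $\epsilon\to 0$ yields the Harnack inequality for $u_n$ with the original $\omega$.

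To prove $Z\geq 0$ on $\overline{\Omega_n}\times\overline{\Omega_n}\times\{0<s\leq t\leq T\}$ for arbitrary $T>0$, I would first verify coercivity on this set: $Z\to\infty$ as $s\to 0^+$ (since $\log\beta(s)\to-\infty$) or as $t\searrow s$ with $x\neq y$, and $Z\geq 2\epsilon s>0$ on the diagonal $\{x=y,\,t=s\}$ by the hypothesis $\lim_{t\to s^+}\omega(x,x;t,s)\geq 0$. The Neumann condition $\nabla u_n\cdot\nu=0$ combined with \eqref{eq:V-convexity} gives $\nabla_x Z\cdot\nu=\nabla_x\omega\cdot\nu\geq 0$ on $\partial\Omega_n$ in the $x$-variable (and symmetrically for $y$), so via a Hopf-type argument negative minima on the spatial boundary can be excluded. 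Suppose then, for contradiction, that $Z$ attains a negative minimum at some $(x_0,y_0,t_0,s_0)$ with $x_0,y_0\in\Omega_n^\circ$. The first-order vanishing conditions give $\nabla\log u_n(x_0,t_0)=-\nabla_x\omega$ and $\nabla\log u_n(y_0,s_0)=\nabla_y\omega$; combining the time-critical equations with the PDE identity $\partial_t\log u_n=\Delta\log u_n+|\nabla\log u_n|^2-V$ and with \eqref{eq:schroe-omega-ineq-1}, \eqref{eq:schroe-omega-ineq-2} then yields
\[
\Delta_x\log u_n(x_0,t_0)\leq -\frac{\beta'(t_0)}{\beta(t_0)}-\epsilon \quad\text{and}\quad -\Delta_y\log u_n(y_0,s_0)\leq \frac{\beta'(s_0)}{\beta(s_0)}-\epsilon.
\]
Testing positive semidefiniteness of the $(x,y)$-Hessian of $Z$ against the vector $(A(t_0)e_i,A(s_0)e_i)$ and summing over $i=1,\ldots,d$, one obtains
\[
A(t_0)^2\Delta_x\omega+2A(t_0)A(s_0)\sum_{i=1}^{d}\omega_{x_iy_i}+A(s_0)^2\Delta_y\omega \geq A(t_0)^2\frac{\beta'(t_0)}{\beta(t_0)}-A(s_0)^2\frac{\beta'(s_0)}{\beta(s_0)}+\epsilon\bigl(A(t_0)^2+A(s_0)^2\bigr),
\]
which together with \eqref{eq:schroe-omega-ineq-3} directly contradicts $\epsilon>0$.

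The main obstacle will be the boundary analysis on $\partial\Omega_n$: since \eqref{eq:V-convexity} is stated non-strictly, excluding boundary minima of $Z$ requires a Hopf-type lemma adapted to this two-point setting, which is the most delicate ingredient. A secondary technicality is confirming the local-uniform convergence $u_n\to u$, which rests on standard parabolic stability estimates but needs care since $V$ is only bounded from below; the temporal free boundary at $t=T$ is harmless, as the interior argument above proceeds identically with inequalities in place of equalities for $\partial_t Z$.
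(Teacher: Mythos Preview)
Your proposal follows the same two-step strategy as the paper: first prove the Harnack inequality on each $\Omega_n$ for the Neumann problem via a multi-point maximum principle applied to a perturbed comparison function $Z$, then pass to the limit $n\to\infty$ using resolvent/semigroup convergence and interior Schauder estimates. Your perturbation $\omega_\epsilon=\omega+\epsilon(t+s)$ is a slight variant of the paper's $\omega+\tfrac{\varepsilon}{2}(t-s)^2+\varepsilon$; in fact yours gives the marginally cleaner contradiction $0\geq\epsilon\bigl(A(t_0)^2+A(s_0)^2\bigr)$, whereas the paper arrives at $0>-\varepsilon(A(t_0)^2-A(s_0)^2)(t_0-s_0)\geq 0$ and therefore genuinely uses that $A$ is strictly increasing.

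The one place where your sketch diverges from the paper is the boundary analysis. You propose to \emph{exclude} boundary minima via a ``Hopf-type argument'', but a Hopf lemma for $Z$ is not available here: $Z$ does not satisfy an elliptic or parabolic inequality in the $x$-variable alone, and the boundary sign condition \eqref{eq:V-convexity} is only non-strict. The paper does not exclude such minima; instead it shows that if $x_0\in\partial\Omega_n$, then the inward normal derivative of $Z$ is $-\nabla_x\omega\cdot\nu(x_0)\leq 0$, so either it is strictly negative (contradicting minimality) or it vanishes, in which case the full gradient $\nabla_xZ(P_0)=0$ holds and the interior first- and second-order argument goes through unchanged. You correctly flag this as the most delicate step, but the resolution is this case distinction rather than a Hopf exclusion.
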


We emphasise that these inequalities \eqref{eq:schroe-omega-ineq-1}--\eqref{eq:V-convexity} are consistent with the assumptions of Theorem \ref{thm:Manifold-Schroedinger-full} if one takes $\omega$ to be the function defined in \eqref{eq:omega-def}. In particular, the function $\omega$ in \eqref{eq:omega-def} satisfies \eqref{eq:schroe-omega-ineq-1} and \eqref{eq:schroe-omega-ineq-2} with equality (see \cite[Section 3]{LiYau1986}), inequality \eqref{eq:schroe-omega-ineq-3} is implied by \eqref{eq:second-order-assumption-manifold}, and the existence of domains $\Omega_n$ on which the inequalities \eqref{eq:V-convexity} hold replaces the $V$-approximability assumption in Theorem \ref{thm:Manifold-Schroedinger-full}. Indeed, if each of the domains $\Omega_n \subset \R^d$ is $V$-convex and $\gamma_0:[0,1] \to \overline{\Omega_n}$ is a $V$-geodesic with $\gamma_0(0)=y$ and $\gamma_0(1)=x$, then the inequalities 
\[\nabla_x \omega \cdot \nu = \frac{\dot{\gamma}_0(1) \cdot \nu}{2(t-s)} \geq 0 \quad \text{and} \quad \nabla_y \omega \cdot \nu = -\frac{\dot{\gamma}_0(0) \cdot \nu}{2(t-s)} \geq 0\]
both hold, that is, \eqref{eq:V-convexity} will be satisfied.

As in the manifold case, Theorem \ref{thm:Euclidean-Schroedinger-full} follows by an approximation argument after obtaining the following result for the Neumann problem.

\begin{theorem}\label{thm:Euclidean-schrödinger}
Let $\Omega \subset \R^d$ be a bounded domain with smooth boundary $\partial \Omega$ and let $V \in C^2(\Omega)$ be bounded from below.
Let $\omega=\omega(x,y;t,s): \overline{\Omega} \times \overline{\Omega} \times S \rightarrow \R$ be a continuous function, which is twice differentiable in the first and second arguments and differentiable in the third and fourth arguments. Suppose that $\omega(x,y;t,s) \rightarrow \infty$ whenever $t \rightarrow s^+$ for $x\ne y$ and $\lim_{t\to s^+} \omega(x,x;t,s) \geq 0$ for all $x \in \overline{\Omega}$. In addition, assume that $\omega$ satisfies \eqref{eq:schroe-omega-ineq-1}--\eqref{eq:schroe-omega-ineq-3} in $\overline{\Omega} \times \overline{\Omega} \times S$. Finally, assume that 
\begin{equation}\label{eq:V-convexity-compact}
    \begin{split}
    \nabla_x \omega \cdot \nu(x) \geq 0, \qquad \text{for all $x \in \partial \Omega$, $y \in \overline{\Omega}$, and $0<s<t$,}\\
    \nabla_y \omega \cdot \nu(y) \geq 0, \qquad \text{for all $x \in \overline{\Omega}$, $y \in \partial \Omega$ and $0<s<t$.}
\end{split}
\end{equation}
Then every positive solution $u$ of 
\begin{equation}\label{eq:schroe-neumann-rd}
   \begin{cases}
   \begin{alignedat}{2}
       \partial_t u &= \Delta u -V u&& \qquad \text{in $\Omega \times (0,\infty)$,} \\
        \nabla u \cdot \nu &= 0 && \qquad \text{on $\partial \Omega \times (0,\infty)$}
   \end{alignedat}
\end{cases} 
\end{equation} satisfies \eqref{eq:heat-harnack-omega} for all $x,y \in \overline{\Omega}$ and $0<s<t$.
\end{theorem}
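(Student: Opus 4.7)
The plan is to reformulate the Harnack inequality \eqref{eq:heat-harnack-omega} as the claim that the function
\[
H(x,y,t,s) := \log u(x,t) - \log u(y,s) + \log \beta(t) - \log \beta(s) + \omega(x,y;t,s)
\]
is non-negative on $\overline{\Omega}\times\overline{\Omega}\times S$, and then to prove $H \geq 0$ by a multi-point maximum principle argument. The hypotheses on $\omega$ force $H\to+\infty$ as $t\to s^+$ with $x\neq y$ and $\liminf_{t\to s^+}H(x,x,t,s)\geq 0$ on the diagonal, while $\beta(s)\to 0$ as $s\to 0^+$ forces $H\to+\infty$ there. So $H\geq 0$ already holds near the natural ``initial'' boundary of the domain, and the task is to propagate this globally.

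Suppose for contradiction that $H(x_0,y_0,t_0,s_0)<0$ for some point, and set $\tilde H_\varepsilon:=H+\varepsilon(t-s)$ with $\varepsilon>0$ small; the linear perturbation is the key device that produces strictness at the end. On a compact sub-region $\{s_1\leq s<t\leq T\}$ chosen small enough in $s_1$ that the blow-up $H\to\infty$ near $s=0$ keeps $\tilde H_\varepsilon$ above zero there, and large enough in $T$ that it still contains $(x_0,y_0,t_0,s_0)$, the infimum of $\tilde H_\varepsilon$ is attained at some $(x_*,y_*,t_*,s_*)$ with $s_1<s_*<t_*\leq T$. The first-order spatial conditions at the minimum give $\nabla_x \log u(x_*,t_*)=-\nabla_x\omega$ and $\nabla_y\log u(y_*,s_*)=\nabla_y\omega$, while the time conditions give $\partial_t H\leq -\varepsilon$ (with equality if $t_*<T$) and $\partial_s H=\varepsilon$. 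Inserting these into the PDE written in the logarithmic form $\partial_t\log u=\Delta\log u+|\nabla\log u|^2-V$ and using \eqref{eq:schroe-omega-ineq-1}--\eqref{eq:schroe-omega-ineq-2} to discard the non-negative remainders $\partial_t\omega+|\nabla_x\omega|^2-V(x_*)\geq 0$ and $\partial_s\omega-|\nabla_y\omega|^2+V(y_*)\geq 0$ yields the two key bounds
\[
\Delta_x\log u(x_*,t_*)\leq -\varepsilon-\frac{\beta'(t_*)}{\beta(t_*)} \qquad\text{and}\qquad -\Delta_y\log u(y_*,s_*)\leq \frac{\beta'(s_*)}{\beta(s_*)}+\varepsilon.
\]

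The multi-point second-order condition now arises by testing the Hessian of $\tilde H_\varepsilon$ at the minimum against the vectors $v_i:=(A(t_*)e_i,A(s_*)e_i)\in\R^{2d}$ for $i=1,\dots,d$ and summing, producing
\[
A(t_*)^2\Delta_x\tilde H_\varepsilon + 2A(t_*)A(s_*)\sum_{i=1}^{d}(\tilde H_\varepsilon)_{x_i y_i} + A(s_*)^2\Delta_y\tilde H_\varepsilon \geq 0.
\]
Since the perturbation contributes nothing to the spatial Hessian, expanding in terms of $\log u$ and $\omega$, substituting the two bounds above, and finally invoking \eqref{eq:schroe-omega-ineq-3} to absorb the pure $\omega$-terms collapses the left-hand side to the upper bound $\varepsilon\bigl[A(s_*)^2-A(t_*)^2\bigr]$. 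As $A$ is strictly increasing and $t_*>s_*$, this is strictly negative, contradicting the non-negativity supplied by the Hessian test. Hence $\tilde H_\varepsilon\geq 0$ on every such sub-region; exhausting with $s_1\downarrow 0$ and $T\uparrow\infty$, then letting $\varepsilon\to 0$, gives $H\geq 0$ on $\overline{\Omega}\times\overline{\Omega}\times S$, which is \eqref{eq:heat-harnack-omega}.

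The main obstacle is the treatment of the spatial boundary $\partial\Omega$. If the minimum of $\tilde H_\varepsilon$ occurs at some $x_*\in\partial\Omega$ then the spatial first- and second-order conditions only hold in tangential directions, so the multi-point Hessian test in directions with a normal component is no longer directly available. I would address this by combining the Neumann boundary condition $\nabla_xu\cdot\nu=0$ with the $V$-convexity hypothesis \eqref{eq:V-convexity-compact} to obtain $\nabla_x\tilde H_\varepsilon\cdot\nu\geq 0$ on $\partial\Omega$; together with the boundary-minimum inequality $\nabla_x\tilde H_\varepsilon\cdot\nu\leq 0$, this forces the normal gradient to vanish at the minimum. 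A Hopf-type refinement -- for instance, a further small perturbation of the form $-\delta\,\mathrm{dist}(x,\partial\Omega)-\delta\,\mathrm{dist}(y,\partial\Omega)$ that strictly enforces \eqref{eq:V-convexity-compact} at the boundary -- then either pushes the minimum into the interior (reducing to the interior case already handled) or produces the contradiction directly via a strict Hopf-type inequality. The case $y_*\in\partial\Omega$ is symmetric.
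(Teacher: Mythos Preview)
Your argument is essentially the paper's: the same auxiliary function (the paper writes $Z$ for your $H$), the same multi-point maximum principle, the same second-order test against the vectors $(A(t_*)e_i,\,A(s_*)e_i)$, and the same final contradiction $\varepsilon\bigl(A(s_*)^2-A(t_*)^2\bigr)<0$. The only cosmetic differences are your perturbation $\varepsilon(t-s)$ versus the paper's $\tfrac{\varepsilon}{2}(t-s)^2+\varepsilon$ (the additive constant makes the diagonal case $t\to s^+$, $x=y$ a little cleaner), and your exhaustion by compact time-slabs versus the paper's first-zero argument along $\rho=t^2+s^2$.

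The one place you diverge is the boundary, and there you are making it harder than necessary. You correctly deduce that the Neumann condition together with \eqref{eq:V-convexity-compact} forces $\nabla_x\tilde H_\varepsilon\cdot\nu=0$ at a boundary minimum. The paper then simply observes that the tangential derivatives also vanish (since $x_*$ minimises $\tilde H_\varepsilon$ along $\partial\Omega$), so the \emph{full} spatial gradient $\nabla_{(x,y)}\tilde H_\varepsilon(P_*)=0$; and once the gradient vanishes at a minimum, the full Hessian inequality $D^2_{(x,y)}\tilde H_\varepsilon(P_*)\geq 0$ holds even at a boundary point, so the interior computation applies verbatim. Your proposed Hopf-type refinement with an extra $-\delta\,\mathrm{dist}(\cdot,\partial\Omega)$ term is unnecessary, and would actually muddy the second-order step by injecting boundary-curvature terms into the Hessian that interfere with the clean use of \eqref{eq:schroe-omega-ineq-3}.
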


We begin in Section \ref{sec:euclidean} by studying the Euclidean case, followed by the manifold case in Section \ref{sec:schroedinger}. In Section \ref{sec:grad-drift}, we briefly consider adding a gradient drift term to the equation \eqref{eq:schroedinger}. Here, our findings follow quickly from the results of Section \ref{sec:schroedinger}. Finally, in Section \ref{sec:diff-harnack}, we demonstrate how to recover a differential Harnack inequality from the Harnack inequality \eqref{eq:heat-harnack-manifold}.


\section{The Euclidean case}\label{sec:euclidean}

\subsection{Proofs of Theorems \ref{thm:Euclidean-Schroedinger-full} and \ref{thm:Euclidean-schrödinger}}
In order to introduce the idea of our method in the simplest possible setting, we begin by considering the case where $M=\Omega \subset \R^d$ is a bounded domain with smooth boundary.

\begin{proof}[Proof of Theorem \ref{thm:Euclidean-schrödinger}]
    Let $u$ be a positive solution of \eqref{eq:schroe-neumann-rd} and set $v=\log u$. Then $v$ is a solution to
    \begin{equation*}\label{eq:schroe-neumann-v}
   \begin{cases}
        \partial_t v = \Delta v + |\nabla v|^2 -V\quad &\text{in $\Omega \times (0,\infty)$,} \\
        \nabla v \cdot \nu = 0 \quad &\text{on $\partial \Omega \times (0,\infty)$}.
    \end{cases} 
\end{equation*}
Fix $\varepsilon > 0$ and define a function $Z$ on $\overline{\Omega} \times \overline{\Omega} \times S$ by
    \[Z(x,y;t,s)=v(x,t) - v(y,s) + \log\left(\frac{\beta(t)}{\beta(s)}\right) + \omega(x,y;t,s) + \frac{\varepsilon}{2}(t-s)^2 + \varepsilon.\]

We make some initial observations about $Z$ for $(t,s)$ near the boundary of $S$. Firstly, as $s \to 0^+$, the term $\log\left(\frac{\beta(t)}{\beta(s)}\right) \to \infty$ due to the assumption that $\beta(0)=0$. Therefore, $Z>0$ for $s$ near 0. Next, consider the behaviour of $Z$ as $t\to s^+$. If $x \ne y$, then the assumption that $\omega \to \infty$ as $t \to s^+$ guarantees that $Z>0$ for $t$ near $s$. Otherwise, if $x=y$, then the difference $v(x,t)-v(x,s) \to 0$, so we can assume $v(x,t)-v(x,s) + \varepsilon > 0$. By assumption, all other terms in the limit of $Z$ are nonnegative, so once again, $Z>0$ for $t$ near $s$. 

Now, let $\rho=t^2 + s^2$. As $\rho \rightarrow 0^+$, that is, when $(t,s)$ is in a small neighbourhood of $(0,0)$ in $S$, $Z>0$ by the same argument as for when $t \to s^+$. Then, for $\rho >0$, $Z$ either remains strictly positive, or there exists $\rho_0:= t_0^2 + s_0^2>0$ and a point $P_0:=(x_0,y_0;t_0,s_0) \in \overline{\Omega} \times \overline{\Omega} \times S$ such that $Z(P_0)=0$. We show that the latter cannot occur.

Suppose, by contradiction, that such a point $P_0$ exists. We assume that $t_0$ and $s_0$ are taken so that $\rho_0$ is minimal, that is, $Z(x,y;t,s)>0$ for all $x,y \in \overline{\Omega}$ and for all $0<s<t$ such that $t^2 + s^2 < \rho_0$. First, assume that $x_0,y_0 \in \Omega$. Then, it follows that 
\[
\partial_t Z(P_0) \leq 0, \quad \partial_s Z(P_0) \leq 0,  \quad \nabla_{(x,y)} Z(P_0) = 0, \quad  D^2_{(x,y)}Z(P_0) \geq 0.
\]
By direct calculation, 
\begin{align*}
    \nabla_x Z(x,y;t,s) &= \nabla_x v(x,t) + \nabla_x\omega(x,y;t,s),\\
    \nabla_y Z(x,y;t,s) &= -\nabla_y v(y,s) +\nabla_y\omega(x,y;t,s),
\end{align*}
and thus $\nabla_{(x,y)} Z(P_0) = 0$ implies that
\begin{equation}\label{eq:schroe-grad}
    \nabla v(x_0,t_0) = -\nabla_x\omega(P_0), \quad \nabla v(y_0,s_0) = \nabla_y\omega(P_0).
\end{equation}
Next, we calculate
\begin{align*}
   \partial_t Z (x,y;t,s) &= \partial_t v(x,t) + \frac{\beta'(t)}{\beta(t)} + \partial_t \omega (x,y;t,s) + \varepsilon(t-s), \\
   \partial_s Z (x,y;t,s) &= -\partial_s v(y,s) - \frac{\beta'(s)}{\beta(s)} + \partial_s \omega (x,y;t,s) - \varepsilon(t-s). 
\end{align*}
Using that $\partial_t v = \Delta v + |\nabla v|^2-V$ in $\Omega$, $\partial_t Z(P_0) \leq 0$, and $\partial_s Z(P_0) \leq 0$ we have that 
\begin{align*}
    \Delta v(x_0,t_0) + |\nabla v(x_0,t_0)|^2 -V(x_0)+ \frac{\beta'(t_0)}{\beta(t_0)} + \partial_t \omega (P_0) + \varepsilon(t_0-s_0) &\leq 0, \\
    - \Delta v(y_0,s_0) - |\nabla v(y_0,s_0)|^2 + V(y_0) - \frac{\beta'(s_0)}{\beta(s_0)} + \partial_s \omega (P_0) - \varepsilon(t_0-s_0) &\leq 0.
\end{align*}
Then, using \eqref{eq:schroe-grad}, we obtain that
\begin{align*}
\begin{split}
\Delta v(x_0,t_0) &\leq -|\nabla_x\omega(P_0)|^2 - \partial_t \omega (P_0)+ V(x_0)  - \frac{\beta'(t_0)}{\beta(t_0)} - \varepsilon(t_0-s_0), 
\end{split}\\
\begin{split}
- \Delta v(y_0,s_0) &\leq |\nabla_y\omega(P_0)|^2 - \partial_s \omega (P_0) - V(y_0) + \frac{\beta'(s_0)}{\beta(s_0)} + \varepsilon(t_0-s_0).   
\end{split}
\end{align*}
Due to the assumed inequalities \eqref{eq:schroe-omega-ineq-1} and \eqref{eq:schroe-omega-ineq-2}, it follows that
\begin{align}
\Delta v(x_0,t_0) &\leq -\frac{\beta'(t_0)}{\beta(t_0)} - \varepsilon(t_0-s_0),   \label{eq:schroe-lap1}\\
- \Delta v(y_0,s_0) &\leq \frac{\beta'(s_0)}{\beta(s_0)} + \varepsilon(t_0-s_0).  \label{eq:schroe-lap2} 
\end{align}

Next, we make use of the inequality $D^2_{(x,y)}Z(P_0) \geq 0$. Let $x(\tau)$ and $y(\tau)$ be two paths in $\Omega$ such that
\begin{align*}
    x'(\tau) &= A(t_0) e_i, \quad x(\tau_0) = x_0, \\
    y'(\tau) &= A(s_0) e_i, \quad y(\tau_0) = y_0
\end{align*}
for some $\tau_0$, where $e_i$ is the $i^{\text{th}}$ standard basis vector. We have that
\begin{align*}
    \begin{split}
        \frac{d}{d\tau}Z(x(\tau),y(\tau);t,s) &= A(t_0) D_i v(x,t) - A(s_0) D_i v(y,s) \\&\hspace{1cm}+  A(t_0)\omega_{x_i}(x,y;t,s) + A(s_0)\omega_{y_i}(x,y;t,s)
    \end{split}
    \\
   \begin{split}
   \frac{d^2}{d\tau^2}Z(x(\tau),y(\tau);t,s) &=  A(t_0)^2D_{ii}v(x,t) - A(s_0)^2 D_{ii}v(y,s) +  A(t_0)^2 \omega_{x_i x_i}(x,y;t,s) \\ & \hspace{1cm}+ A(s_0)^2 \omega_{y_i y_i}(x,y;t,s) + 2A(t_0)A(s_0) \omega_{x_i y_i}(x,y;t,s)
   \end{split}
\end{align*}
Therefore, since $\frac{d^2}{d\tau^2}Z(P_0) \geq 0$, we have that
\[
\begin{split}
   &A(t_0)^2D_{ii}v(x_0,t_0) - A(s_0)^2 D_{ii}v(y_0,s_0) +  A(t_0)^2 \omega_{x_i x_i}(P_0) \\ & \hspace{3cm}+ A(s_0)^2 \omega_{y_i y_i}(P_0) + 2A(t_0)A(s_0) \omega_{x_i y_i}(P_0) \geq 0,
   \end{split}
\]
so that summing over $i$ yields 
\[
   \begin{split}
       A(t_0)^2&\Delta v(x_0,t_0) - A(s_0)^2 \Delta v(y_0,s_0) \\ &+ A(t_0)^2\Delta_x \omega(P_0) + A(s_0)^2\Delta_y \omega(P_0) + 2A(t_0)A(s_0) \sum_{i=1}^{d} \omega_{x_i y_i}(P_0) \geq 0.
   \end{split}
\]
By using the inequality \eqref{eq:schroe-omega-ineq-3}, as well as \eqref{eq:schroe-lap1} and \eqref{eq:schroe-lap2}, we obtain  
\[
0 > -\varepsilon(A(t_0)^2 - A(s_0)^2)(t_0-s_0) \geq 0,
\]
which is a contradiction. Therefore, such a point $P_0$ with $x_0, y_0 \in \Omega$ cannot exist. Thus, either one or both of $x_0$, $y_0$ must be on the boundary $\partial \Omega$. 

Without loss of generality, assume $x_0 \in \partial \Omega$. Then 
\begin{equation}\label{eq:Z-normal-heat}
    \left.\frac{d}{d\tau} Z(x_0 - \tau \nu(x_0),y_0;t_0,s_0)\right|_{\tau=0} = -\nabla v(x_0,t_0) \cdot \nu(x_0) - \nabla_x\omega(P_0)\cdot \nu(x_0) \leq 0,
\end{equation}
where we have used that $\nabla v(x_0,t_0) \cdot \nu(x_0)=0$ by the Neumann condition on $v$ as well as the assumption \eqref{eq:V-convexity-compact} on $\omega$. One of two cases must occur: either the inequality in \eqref{eq:Z-normal-heat} is strict, or we have equality.

Firstly, if \[\left.\frac{d}{d\tau} Z(x_0 - \tau \nu(x_0),y_0;t_0,s_0)\right|_{\tau=0}<0,\] since $Z$ is at least continuously differentiable in the first argument, there is some $\tau^* > 0$ such that $\frac{d}{d\tau} Z(x_0 - \tau \nu(x_0),y_0;t_0,s_0)<0$ for all $\tau \in [0,\tau^*)$. Integrating over this interval, we have \[Z(x_0-\tau^*\nu(x_0),y_0;t_0,s_0) < Z(x_0,y_0;t_0,s_0),\]
which contradicts $x_0$ minimising $Z(x,y_0;t_0,s_0)$.

The other possibility is that \begin{equation}\label{eq:zero}\left.\frac{d}{d\tau} Z(x_0 - \tau \nu(x_0),y_0;t_0,s_0)\right|_{\tau=0}=0.\end{equation} Considering $Z(x)=Z(x,y_0;t_0;s_0)$ as a function only of $x$, then \eqref{eq:zero} is equivalent to $\nabla_x Z(x_0)\cdot \nu(x_0) = 0$. Since $Z(x)$ has a minimum at $x=x_0$, then all derivatives of $Z(x)$ at $x_0$ in directions tangential to $\partial \Omega$ at $x_0$ must also be 0. Therefore, we conclude that $\nabla_xZ(x_0)=\nabla_x Z(P_0) = 0$.

Now consider whether $y \in \partial \Omega$ or $y \in \Omega$. If $y_0 \in \partial \Omega$, we could repeat our previous arguments to obtain $\nabla_yZ(P_0)=0$. If $y_0 \in \Omega$, then the function $Z(y)=Z(x_0,y;t_0,s_0)$ attains a minimum at $y=y_0$ and therefore $\nabla_yZ(P_0)=0$. In either case, we have that $\nabla_x Z(P_0) = 0$ and $\nabla_yZ(P_0)=0$, so $\nabla_{(x,y)}Z(P_0) =0$. Since $Z$ has a minimum at $P_0$, it follows that $D^2_{(x,y)}Z(P_0) \geq 0$. Combining these facts with $\partial_tZ(P_0) \leq 0$ and $\partial_sZ(P_0) \leq 0$, we may repeat the argument from when $x_0,y_0 \in \Omega$ to arrive at a contradiction. 

Therefore, $Z>0$ for all $x,y \in \overline{\Omega}$ and all $0<s<t$. Finally, taking $\varepsilon \rightarrow 0$ shows that the Harnack inequality \eqref{eq:heat-harnack-omega} holds.
\end{proof}

We now explain our result for the full-space problem.

\begin{proof}[Proof of Theorem \ref{thm:Euclidean-Schroedinger-full}]
Suppose that  $(\Omega_n)_{n \geq 1}$ is an increasing sequence of smooth bounded domains $\Omega_n \subseteq \R^d$ with $\bigcup_{n \geq 1} \Omega_n = \R^d$ such that \eqref{eq:V-convexity} holds. Let $u$ be a positive solution of \eqref{eq:schroedinger} with initial value $u_{0}:=u(x,0) \in L^2(\R^d)$ and define $u_{0n}:= u_{0}|_{\Omega_{n}}$ for all $n \geq 1$. Let $A_n$ be the operator $-\Delta+V$ with homogeneous Neumann boundary conditions and let $u_n$ be the positive solution to the problem 
\[
   \begin{cases}
   \begin{alignedat}{2}
       \partial_t u_{n} +A_n u&= 0&& \qquad \text{in $\Omega \times (0,\infty)$,} \\
         u_{n} (\cdot,0) &= u_{0n} && \qquad \text{in $\Omega _{n}$.}
   \end{alignedat}
\end{cases} 
\]
By Theorem \ref{thm:Euclidean-schrödinger}, $u_{n}$ satisfies the Harnack inequality 
\begin{equation}\label{eq:harnack-n}
u_{n}(x,t) \geq u_{n}(y,s) \frac{\beta(s)}{\beta(t)} e^{-\omega(x,y;t,s)}
\end{equation}
for all $x,y \in \overline{\Omega_n}$ and $0<s<t$.
We note that $A_n$ can be realised in $L^2(\Omega_n)$ by a continuous, elliptic form, and is therefore a quasi-$m$-accretive operator on $L^2(\Omega_n)$. The extension $\tilde{A}_n$ of $A_n$ by zero remains a quasi-$m$-accretive operator on $L^2(\R^d)$. Similarly, the operator $A=-\Delta + V$ is quasi-$m$-accretive on $L^2(\R^d)$. Moreover, it is not difficult to see that with $\tilde{A_n}$ converges to $A$ in $L^2(\R^d)$ in the resolvent sense. Therefore, using standard arguments from linear semigroup theory (see, for example, Kato \cite[Theorem 2.16 of Chapter 9]{kato}), it follows that $\tilde{u}_n(t) \to u(t)$ in $L^2(\R^d)$ uniformly for $t \in [0,T]$ and for every $T\geq 0$. In addition, using interior Schauder estimates (see, for example, Friedman \cite{friedman-parabolic}), one obtains that $u_n(x,t) \to u(x,t)$ locally uniformly in $\R^d \times (0,\infty)$. Therefore, taking $n \to \infty$ in the Harnack inequality \eqref{eq:harnack-n} yields the claimed result.    
\end{proof}

\subsection{An optimal $\omega$ and comparison results}\label{sec:optimal-omega}

The function $\omega$ in Theorem \ref{thm:Euclidean-Schroedinger-full} and Theorem \ref{thm:Euclidean-schrödinger} could potentially be any function satisfying the conditions of these theorems, especially the inequalities \eqref{eq:schroe-omega-ineq-1}--\eqref{eq:V-convexity}. However, an optimal candidate is the function $\omega$ defined by
\begin{equation}\label{eq:optimal-omega-def}
\omega(x,y;t,s):= \min_{\gamma \in \Gamma_{x,y}} E[\gamma;t,s]
\end{equation}
for the energy 
\begin{equation}\tag{\ref{eq:definition-of-E}}
E[\gamma;t,s]:= \frac{1}{4(t-s)}\int_0 ^1 |\dot{\gamma}|^2 \dd \tau + (t-s) \int_0^1 V(\gamma(\tau)) \dd \tau.   
\end{equation} This function $\omega$ is already well-known from the work of Li-Yau \cite{LiYau1986}. It occurs naturally while performing space-time integration to derive the Harnack inequality \eqref{eq:li-yau-harnack} from its differentiated form \eqref{eq:li-yau-diff-harnack}. The function $\omega$ defined in \eqref{eq:optimal-omega-def} is also closely related to the fundamental solution $\Gamma$ of the equation \eqref{eq:schroedinger}. In particular, it was proven first in the Euclidean case $M=\R^d$ by Simon \cite{Simon-1983} and later on a complete manifold $M$ by Li and Yau \cite{LiYau1986}, that $\omega$ can be expressed via the limit \[
    \omega(x,y;t,s) = -\lim_{\lambda \to 0} \frac{\log \Gamma_\lambda(x-y,\frac{t-s}{\lambda})}{\lambda},
    \]
    where $\Gamma_\lambda$ denotes the fundamental solution of the equation
    \begin{equation*}
        \partial_t u = \Delta u - \lambda^2V u \qquad \text{in $M\times (0,\infty)$}.
    \end{equation*}
Importantly, it was also demonstrated in \cite{LiYau1986} that the function $\omega$ defined in \eqref{eq:optimal-omega-def} satisfies both \eqref{eq:schroe-omega-ineq-1} and \eqref{eq:schroe-omega-ineq-2} with equality. In addition, as explained in the introduction, the inequalities \eqref{eq:V-convexity} are satisfied on any $V$-convex domain. However, in general, it is not clear that an inequality of the form 
\begin{equation}
    A(t)^2\Delta_x\omega +A(s)^2\Delta_y \omega + 2A(t)A(s) \sum_{i=1}^{d} \omega_{x_i y_i} \leq A(t)^2 \frac{\beta'(t)}{\beta(t)}-A(s)^2 \frac{\beta'(s)}{\beta(s)}\tag{\ref{eq:schroe-omega-ineq-3}}
\end{equation}
should be satisfied by this particular function $\omega$. We now give some important examples, where $\omega$ can be computed explicitly and appropriate functions $A$ and $\beta$ can be identified, which make inequality \eqref{eq:schroe-omega-ineq-3} true.

\begin{example}[Heat equation]
In the case of the heat equation ($V \equiv 0$), the optimal choice of $\omega$ is the function \[
    \omega(x,y;t,s):= \frac{|x-y|^2}{4(t-s)},
    \]
    which is the value attained by $E$ along the straight line segment, i.e. geodesic, connecting $x$ and $y$. In addition, $\omega$ satisfies \eqref{eq:schroe-omega-ineq-3} with equality if $A(\tau) = \tau$ and $\beta(\tau) = \tau^{d/2}$.
    With this choice, we recover the classical parabolic Harnack inequality 
    \begin{equation}\label{eq:classic-Harnack}
    u(x,t)\geq u(y,s) \left(\frac{s}{t} \right)^{d/2} e^{-\frac{|x-y|^2}{4(t-s)}}.
\end{equation} 
    
    We remark that \eqref{eq:classic-Harnack} is considered sharp, in the sense that it is satisfied by the fundamental solution 
    \[
    \Gamma(x,t) = (4 \pi t)^{-d/2}e^{\frac{-|x|^2}{4t}}
    \]
    with equality on the set of points \[
    \{(x,y;t,s) \in \R^d \times \R^d \times S \mid sx=ty \}.
    \] 
\end{example}

\begin{example}[Quadratic potential]\label{ex:quadratic}
Another noteworthy case to consider is that of the quadratic potential \[V(x) = C_1^2|x-a|^2+C_2\] for $a \in \R^d$ and constants $C_1,C_2 \ne 0$.  
In this case, one can compute explicitly that the energy $E[\gamma;t,s]$ given by \eqref{eq:definition-of-E} is minimised by the curve defined by \[
\begin{split}
    \gamma_0(\tau)= a+\frac{1}{\sinh(2C_1(t-s))} \Big(\sinh&(2C_1\tau (t-s))(x-a) \\&+ \sinh(2C_1(1-\tau) (t-s))(y-a)\Big)
\end{split}
\]
for all $\tau \in [0,1]$. It then follows that 
\[
\begin{split}
\omega (x,y;t,s) &= \frac{C_1}{2}\left(\frac{|x-y|^2}{\sinh(2C_1(t-s))}+(|x-a|^2 + |y-a|^2)\tanh(C_1(t-s))\right) \\&\hspace{1cm}+ C_2(t-s).   
\end{split}
\]
One can verify that this choice of $\omega$ satisfies \eqref{eq:schroe-omega-ineq-3} with equality if we set \[A(t) = \sinh(2C_1t), \qquad \text{and} \qquad \beta(t) = \sinh^{d/2}(2C_1t).\] Then, Theorem \ref{thm:Euclidean-Schroedinger-full} applied with $\Omega_n:= B_n(a)$ yields that every positive solution $u$  
of 
\begin{equation}\label{eq:schroe-quad-potential}
\partial_t u = \Delta u-(C_1^2|x-a|^2+C_2) u \qquad \text{in $\R^d \times (0,\infty)$}
\end{equation} satisfies the Harnack inequality \begin{equation}\label{eq:schroe-harnack-quad-potential}
  u(x,t) \geq u(y,s)\left(\frac{\sinh(2C_1s)}{\sinh(2C_1t)}\right)^{d/2}e^{-\omega(x,y;t,s)}  
\end{equation}
for every $x,y \in \R^d$ and $0 < s <t$.

We note that inequality \eqref{eq:schroe-harnack-quad-potential} is sharp in a similar sense to how the classical result for the heat equation without potential is sharp. For $a=0$, $C_2=0$, the fundamental solution of \eqref{eq:schroe-quad-potential} is known from \cite{Raich-Tinker-2016} to have the explicit formula
\[
    \Gamma(x,t) = \left(\frac{2\pi}{C_1}  \sinh(2C_1t)\right)^{-d/2}e^{\frac{-C_1|x|^2}{2\tanh(2C_1t)}}.
\]
One can verify that this solution satisfies \eqref{eq:schroe-harnack-quad-potential} with equality on the set of points \[\{ (x,y;t,s) \in \R^d \times \R^d \times S \mid \sinh(2Cs)x = \sinh(2Ct)y\}.\] This is an improvement of the inequality \eqref{eq:li-yau-harnack} obtained by Li and Yau \cite{LiYau1986}.
\end{example}

While it is difficult to determine whether for a given potential, the function $\omega$ defined in \eqref{eq:optimal-omega-def} satisfies \eqref{eq:schroe-omega-ineq-3}, the following comparison theorem can aid in finding suitable functions $A$ and $\beta$, which make \eqref{eq:schroe-omega-ineq-3} hold. We mention that throughout the remainder of this section, it is more convenient to work with the reparametrised energy 
\begin{equation}\label{eq:repara-energy}
E[\gamma;t,s]:= \frac{1}{4} \int_s^t |\dot{\gamma}|^2 \dd \tau + \int_s^t V(\gamma(\tau)) \dd \tau
\end{equation}
defined for curves $\gamma:[s,t] \to \R^d$ for $0<s<t$. Importantly, the minimum value denoted by $\omega$ does not depend on which definition we use for $E$.

\begin{theorem}[Comparison theorem]\label{thm:Comparison}
Let $\Omega \subseteq \R^d$ be open and suppose that $V_1,V_2 \in C^2(\Omega)$ are two potential functions bounded from below, which satisfy $\Delta V_1 \leq \Delta V_2$ in $\Omega$. Assume that there exist strictly increasing, differentiable functions $\beta:[0,\infty) \to [0,\infty)$ satisfying $\beta(0)=0$ and $A:[0,\infty) \to [0,\infty)$ such that \[
\int_s^t \frac{d}{2}A'(\tau)^2 + A(\tau)^2 \Delta V_2(\gamma_0(\tau)) \dd \tau \leq A(t)^2 \frac{\beta'(t)}{\beta(t)}-A(s)^2 \frac{\beta'(s)}{\beta(s)}
\]
for all $V_2$-geodesics $\gamma_0$ and $0<s<t$. Then the function $\omega_1$ defined by \eqref{eq:optimal-omega-def} for the potential $V=V_1$ satisfies \[
A(t)^2\Delta_x\omega_1 +A(s)^2\Delta_y \omega_1 + 2A(t)A(s) \sum_{i=1}^{d} {\omega_1}_{x_i y_i} \leq A(t)^2 \frac{\beta'(t)}{\beta(t)}-A(s)^2 \frac{\beta'(s)}{\beta(s)}
\]
in $\overline{\Omega} \times\overline {\Omega} \times  S$. Moreover, every positive solution $u$ to \eqref{eq:schroe-neumann-rd} with potential $V=V_1$ on a domain $\Omega_1 \subseteq \Omega$ satisfies the Harnack inequality 
\begin{equation}\label{eq:harnack-comparison}
   u(x,t) \geq u(y,s) \frac{\beta(s)}{\beta(t)} e^{-\omega_1(x,y;t,s)} 
\end{equation}
for all $x,y \in \overline{\Omega_1}$ and $0<s<t$, where $\Omega_1$ is assumed to be $V_1$-convex if $\Omega \ne \R^d$ and $\Omega_1 = \R^d$ if $\Omega = \R^d$ is $V_1$-approximable.
\end{theorem}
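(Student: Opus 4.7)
The plan is to verify that $\omega_1$ satisfies the inequality \eqref{eq:schroe-omega-ineq-3} on $\overline{\Omega}\times\overline{\Omega}\times S$; once this is done, the Harnack inequality \eqref{eq:harnack-comparison} follows at once by applying Theorem~\ref{thm:Euclidean-schrödinger} (when $\Omega\ne \R^d$) or Theorem~\ref{thm:Euclidean-Schroedinger-full} (when $\Omega=\R^d$) with $V=V_1$ and $\omega=\omega_1$. As recalled in Section~\ref{sec:optimal-omega}, this choice of $\omega_1$ already satisfies \eqref{eq:schroe-omega-ineq-1} and \eqref{eq:schroe-omega-ineq-2} with equality for $V=V_1$, and the boundary conditions \eqref{eq:V-convexity-compact} (respectively \eqref{eq:V-convexity}) are ensured by the $V_1$-convexity (respectively $V_1$-approximability) of $\Omega_1$: by Remark~\ref{rem:V-convex}, $V_1$-geodesics stay inside $\Omega_1$ and meet the boundary transversally pointing inward, so $\nabla_x\omega_1\cdot\nu=\dot{\gamma}_1(t)\cdot\nu/[2(t-s)]\ge 0$ and symmetrically for the $y$-derivative. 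Hence the only non-trivial task is to establish \eqref{eq:schroe-omega-ineq-3} for $\omega_1$.

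The core of the proof is a second-variation argument using translates of a $V_1$-geodesic as test curves. Fix $(x,y;t,s)\in\overline{\Omega}\times\overline{\Omega}\times S$, let $\gamma_1\colon[s,t]\to\Omega$ be a $V_1$-geodesic from $y$ to $x$ in the parametrisation of \eqref{eq:repara-energy}, and for each $i\in\{1,\dots,d\}$ and small $\tau$, set $\gamma_1^{\tau,i}(u):=\gamma_1(u)+A(u)\tau e_i$. Since $\gamma_1^{\tau,i}$ connects $y+A(s)\tau e_i$ to $x+A(t)\tau e_i$, the minimality of $\omega_1$ gives
\[
\omega_1\bigl(x+A(t)\tau e_i,\,y+A(s)\tau e_i;\,t,s\bigr)\le E[\gamma_1^{\tau,i};t,s]
\]
with equality at $\tau=0$, so the second derivative in $\tau$ at $\tau=0$ satisfies the reverse inequality. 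A direct computation yields $\frac{d^2}{d\tau^2}\big|_{\tau=0} E[\gamma_1^{\tau,i};t,s] = \frac12\int_s^t A'(u)^2\,du + \int_s^t A(u)^2 (V_1)_{x_ix_i}(\gamma_1(u))\,du$, while differentiating the left-hand side produces the directional Hessian $A(t)^2(\omega_1)_{x_ix_i} + 2A(t)A(s)(\omega_1)_{x_iy_i}+A(s)^2(\omega_1)_{y_iy_i}$; summing over $i=1,\dots,d$ gives
\[
A(t)^2\Delta_x\omega_1 + A(s)^2\Delta_y\omega_1 + 2A(t)A(s)\sum_{i=1}^d (\omega_1)_{x_iy_i}\;\le\; \int_s^t\Bigl(\tfrac{d}{2}A'(u)^2+A(u)^2\,\Delta V_1(\gamma_1(u))\Bigr)du.
\]
The pointwise bound $\Delta V_1\le \Delta V_2$ then lets me replace $\Delta V_1$ by $\Delta V_2$ in the integrand, and invoking the hypothesis of the theorem bounds the resulting right-hand side by $A(t)^2\beta'(t)/\beta(t)-A(s)^2\beta'(s)/\beta(s)$, which is exactly \eqref{eq:schroe-omega-ineq-3}.

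The main obstacle lies in this last step: the hypothesis is formulated along $V_2$-geodesics, whereas the test curve produced by the second-variation argument is a $V_1$-geodesic. In the examples of Section~\ref{sec:optimal-omega}---in particular for the quadratic potential, where $\Delta V_2$ is constant and the integrand is curve-independent---the bound transfers immediately; in the general case, one must either interpret the hypothesis as holding along every admissible curve from $y$ to $x$ on $[s,t]$ (which is the natural reading for a comparison principle to hold) or supplement it with a one-dimensional estimate comparing the integrals of $A^2\Delta V_2$ along $V_1$- and $V_2$-geodesics with common endpoints. A secondary technical point is that $\omega_1$ may fail to be $C^2$ at points where the $V_1$-minimiser is not unique; this is standardly handled by restricting the argument to the open full-measure set on which $\omega_1$ is $C^2$ and passing to the limit, or by reading \eqref{eq:schroe-omega-ineq-3} in the viscosity sense.
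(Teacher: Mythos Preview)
Your approach is essentially identical to the paper's: the second-variation computation you carry out is precisely Lemma~2.8 (applied with $V=V_1$), and the paper's proof then performs exactly your chain of inequalities---replace $\Delta V_1$ by $\Delta V_2$ and invoke the hypothesis---to conclude. The paper also defers the Harnack inequality to Theorems~\ref{thm:Euclidean-Schroedinger-full} and~\ref{thm:Euclidean-schrödinger} just as you do.

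The ``obstacle'' you flag in your final paragraph is perceptive and applies equally to the paper's own proof: the hypothesis is stated for $V_2$-geodesics $\gamma_0$, yet the curve appearing after the second-variation step is a $V_1$-geodesic, and the paper's proof simply writes the final inequality without commenting on this discrepancy. Your suggested resolutions are exactly the ones that make the argument go through: either read the hypothesis as holding along all admissible curves (which is how it is actually used), or note---as you do---that in the intended application (Corollary~\ref{cor:comparison-with-quad-pot}, quadratic $V_2$) the quantity $\Delta V_2$ is constant and the integrand is curve-independent, so the issue evaporates. Your secondary remark about the possible failure of $C^2$-regularity of $\omega_1$ is likewise not addressed in the paper's proof of this theorem.
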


This theorem follows immediately from a short lemma.

\begin{lemma}\label{lem:omega-second-order-ineq}
Let $V \in C^2(\Omega)$ be bounded from below and $A:[0,\infty) \to \R$ differentiable. Then $\omega$ as defined by minimising the energy \eqref{eq:repara-energy} satisfies    
\begin{equation}\label{eq:lemma}
\begin{split}
   A(t)^2 \Delta_x \omega + A&(s)^2 \Delta_y \omega + 2A(t)A(s) \sum_{i=1}^d\frac{\partial^2 \omega}{\partial x_i \partial y_i} \\&\hspace{0.5cm}\leq \int_s^t \frac{d}{2}A'(\tau)^2 + A(\tau)^2 \Delta V(\gamma_0(\tau)) \dd \tau 
\end{split}
    \end{equation}
for all $V$-geodesics $\gamma_0$ in $\Omega$ and $0<s<t$.
\end{lemma}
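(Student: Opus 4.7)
The plan is to exploit the fact that $\omega$ is defined by a minimisation, so $E[\gamma;t,s] \geq \omega(x,y;t,s)$ for any admissible competitor curve $\gamma \in \Gamma_{x,y}$, with equality when $\gamma$ is a $V$-geodesic realising the minimum. I would fix an index $i \in \{1,\ldots,d\}$, fix a $V$-geodesic $\gamma_0 \in \Gamma_{x,y}$ associated with some $0<s<t$, and introduce the coupled one-parameter variation
\[
x(\varepsilon) := x + \varepsilon A(t) e_i, \qquad y(\varepsilon) := y + \varepsilon A(s) e_i, \qquad \gamma_\varepsilon(\tau) := \gamma_0(\tau) + \varepsilon A(\tau) e_i,
\]
where $e_i$ is the $i$-th standard basis vector. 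By construction $\gamma_\varepsilon(s) = y(\varepsilon)$ and $\gamma_\varepsilon(t) = x(\varepsilon)$, so $\gamma_\varepsilon$ is admissible for the endpoints $(x(\varepsilon),y(\varepsilon))$. Setting
\[
f_i(\varepsilon) := E[\gamma_\varepsilon;t,s] - \omega(x(\varepsilon),y(\varepsilon);t,s),
\]
minimality of $\gamma_0$ gives $f_i(\varepsilon) \geq 0$ with $f_i(0) = 0$, so $\varepsilon = 0$ is a minimiser of $f_i$. Hence $f_i''(0) \geq 0$, that is,
\[
\frac{d^2}{d\varepsilon^2}\bigg|_{\varepsilon=0} \omega(x(\varepsilon),y(\varepsilon);t,s) \;\leq\; \frac{d^2}{d\varepsilon^2}\bigg|_{\varepsilon=0} E[\gamma_\varepsilon;t,s].
\]

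The left-hand side, by the chain rule, equals $A(t)^2 \omega_{x_i x_i} + 2A(t)A(s)\, \omega_{x_i y_i} + A(s)^2 \omega_{y_i y_i}$ evaluated at $(x,y;t,s)$. For the right-hand side, expanding the reparametrised energy \eqref{eq:repara-energy} gives
\[
E[\gamma_\varepsilon;t,s] = \tfrac{1}{4}\!\int_s^t |\dot\gamma_0 + \varepsilon A' e_i|^2 \dd\tau + \int_s^t V(\gamma_0 + \varepsilon A e_i) \dd\tau,
\]
and a direct computation (or Taylor expansion of $V$ to second order) yields
\[
\frac{d^2}{d\varepsilon^2}\bigg|_{\varepsilon=0} E[\gamma_\varepsilon;t,s] = \tfrac{1}{2}\!\int_s^t A'(\tau)^2 \dd\tau + \int_s^t A(\tau)^2\, V_{ii}(\gamma_0(\tau)) \dd\tau.
\]
Summing the resulting pointwise inequality over $i = 1,\ldots,d$ collapses the kinetic contribution to $\tfrac{d}{2}\int_s^t A'(\tau)^2 \dd\tau$ and the Hessian trace $\sum_i V_{ii}$ to $\Delta V$, producing precisely \eqref{eq:lemma}.

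The main technical obstacle is regularity: a priori $\omega$ need only be Lipschitz in $(x,y)$, whereas the chain-rule identity and the inequality $f_i''(0) \geq 0$ presuppose second-order differentiability. This is handled in the standard way for value functions of calculus-of-variations problems. Since $E$ is semiconvex in the endpoints with a uniform modulus (the kinetic term is strictly convex and $V \in C^2$ is bounded below), the minimised quantity $\omega(\cdot,\cdot;t,s)$ is semiconcave and in particular twice differentiable almost everywhere by Aleksandrov's theorem, and the one-sided second variation bound $f_i''(0) \geq 0$ above holds in the sense of distributions or, equivalently, in the viscosity sense. The latter formulation is sufficient for the multi-point maximum principle of Theorem \ref{thm:Euclidean-schrödinger}, because the test perturbations used there probe $\omega$ only from above along the same family of coupled directions considered here. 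Thus the inequality \eqref{eq:lemma} holds in the appropriate weak sense wherever needed, and Theorem \ref{thm:Comparison} then follows from Theorem \ref{thm:Euclidean-schrödinger} applied with this choice of $\omega$.
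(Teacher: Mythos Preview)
Your proposal is correct and follows essentially the same approach as the paper: introduce the one-parameter family $\gamma_\varepsilon(\tau)=\gamma_0(\tau)+\varepsilon A(\tau)e_i$ with the induced endpoint variation, use the minimality of $\gamma_0$ to compare second derivatives of $\omega$ and $E$ at $\varepsilon=0$, compute the second variation of $E$ explicitly, and sum over $i$. The paper's proof does exactly this (with $r$ in place of $\varepsilon$) and simply assumes the requisite second-order differentiability of $\omega$ without comment; your additional paragraph on semiconcavity and the viscosity interpretation is a welcome clarification but not a departure in method.
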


\begin{proof}
Fix $x,y \in \Omega$ and denote by $\gamma_0$ a path which minimises $E$. Consider a family of curves defined by \[\gamma(\tau,r) = \gamma_0(\tau) + r A(\tau) e_i.\]
In particular, $\gamma(\tau,0) = \gamma_0(\tau)$ and the variation of the endpoints of the curves is described by 
\begin{align*}
x(r)&:=\gamma(t,r)=x + r A(t) e_i,\\
y(r)&:=\gamma(s,r)=y + r A(s) e_i.
\end{align*} 
Now, for any $r$, \[\omega(x(r),y(r);t,s) = \min_{\gamma \in \Gamma_{x(r),y(r)}} E[\gamma;t,s] \leq E[\gamma(\tau,r);t,s],\] where equality is attained for $r = 0$. Therefore, it follows that as a function of $r$, $\omega(x(r),y(r);t,s) - E[\gamma(\tau,r);t,s]$ is negative and attains a maximum value of $0$ at $r=0$. Therefore, \[\left.\frac{d^2}{dr^2}\omega(x(r),y(r);t,s) \right\rvert_{r=0} \leq \left.\frac{d^2}{dr^2}E[\gamma;t,s]\right\rvert_{r=0}.\] 
Computing both sides of this inequality, we obtain  
\[
\begin{split}
  A(t)^2 \frac{\partial^2 \omega}{\partial x_i^2} + A&(s)^2 \frac{\partial^2 \omega}{\partial y_i^2} + 2A(t)A(s) \frac{\partial^2 \omega}{\partial x_i \partial y_i} \\&\hspace{0.5cm}\leq \int_s^t \frac{1}{2}A'(\tau)^2 + A(\tau)^2 \frac{\partial^2}{\partial x_i^2}V(\gamma_0(\tau)) \dd \tau.  
\end{split}
\]
The claimed inequality \eqref{eq:lemma} follows by summing over $i$.
\end{proof}

\begin{proof}[Proof of Theorem \ref{thm:Comparison}]
    Applying Lemma \ref{lem:omega-second-order-ineq} to $\omega_1$, we have
    \begin{align*}
        A(t)^2 \Delta_x \omega_1 + A&(s)^2 \Delta_y \omega_1 + 2A(t)A(s) \sum_{i=1}^d\frac{\partial^2 \omega_1}{\partial x_i \partial y_i} \\ &\leq \int_s^t \frac{d}{2}A'(\tau)^2 + A(\tau)^2 \Delta V_1(\gamma_0(\tau)) \dd \tau\\
        &\leq \int_s^t \frac{d}{2}A'(\tau)^2 + A(\tau)^2 \Delta V_2(\gamma_0(\tau)) \dd \tau\\
        &\leq A(t)^2 \frac{\beta'(t)}{\beta(t)}-A(s)^2 \frac{\beta'(s)}{\beta(s)}
    \end{align*}
Hence the function $\omega_1$ corresponding to $V_1$ satisfies \eqref{eq:schroe-omega-ineq-3} with the functions $A$ and $\beta$ borrowed from $V_2$. All other necessary assumptions from Theorem~\ref{thm:Euclidean-Schroedinger-full} (if $\Omega = \R^d$) and Theorem~\ref{thm:Euclidean-schrödinger} (if $\Omega \ne \R^d$) hold and the Harnack inequality \eqref{eq:harnack-comparison} follows as a consequence.
\end{proof}

Applying Theorem \ref{thm:Comparison} with the quadratic potential $V_2 = C^2 |x|^2$, we find that a Harnack inequality holds for the class of potentials $V$ with $\Delta V$ bounded from above.

\begin{corollary}\label{cor:comparison-with-quad-pot}
    Let $\Omega \subseteq \R^d$ and $V \in C^2(\Omega)$ be bounded from below, such that either $\Omega$ is $V$-convex $(\Omega \ne \R^d)$ or $\Omega=\R^d$ is $V$-approximable. Suppose there is a constant $C\ne 0$ such that $\Delta V \leq 2dC^2$. 
Then, every positive solution $u$ of \eqref{eq:schroe-neumann-rd} satisfies
\begin{equation}
u(x,t) \geq u(y,s) \left(\frac{\sinh(2Cs)}{\sinh(2Ct)}\right)^{d/2} e^{-\omega(x,y;t,s)}
\end{equation}
for every $x,y \in \overline{\Omega}$ and $0<s<t$, where $\omega$ is obtained by minimising the energy \eqref{eq:repara-energy}. \end{corollary}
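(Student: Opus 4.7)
The plan is to invoke the Comparison Theorem (Theorem~\ref{thm:Comparison}) with $V_1 := V$ and the quadratic potential $V_2(x) := C^2|x|^2$, importing from Example~\ref{ex:quadratic} the explicit functions $A(\tau) := \sinh(2C\tau)$ and $\beta(\tau) := \sinh^{d/2}(2C\tau)$ that are already known to be compatible with $V_2$. The hypotheses of Theorem~\ref{thm:Comparison} split into three checks, all of which are essentially immediate.

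First I would verify the Laplacian comparison: by assumption $\Delta V_1 = \Delta V \leq 2dC^2 = \Delta V_2$ on $\Omega$. The case distinction in Theorem~\ref{thm:Comparison} is also accommodated, since the corollary assumes $\Omega$ to be $V$-convex when $\Omega \neq \R^d$, and $V$-approximable when $\Omega = \R^d$, so I may take $\Omega_1 = \Omega$ in either case. Second, I would check that $(A,\beta)$ satisfies the integral inequality in the hypothesis of Theorem~\ref{thm:Comparison} when applied to $V_2$. Because $\Delta V_2 \equiv 2dC^2$ is constant, the integrand collapses, independently of $\gamma_0$, via the identity $\cosh^2 + \sinh^2 = \cosh(2\,\cdot\,)$, to
\[
\tfrac{d}{2}A'(\tau)^2 + A(\tau)^2 \Delta V_2(\gamma_0(\tau)) = 2dC^2\bigl(\cosh^2(2C\tau) + \sinh^2(2C\tau)\bigr) = 2dC^2 \cosh(4C\tau).
\]
Integration from $s$ to $t$ gives $\tfrac{dC}{2}[\sinh(4Ct)-\sinh(4Cs)]$, and a short computation using $\beta'(t)/\beta(t) = dC\coth(2Ct)$ shows that this is exactly $A(t)^2\beta'(t)/\beta(t) - A(s)^2\beta'(s)/\beta(s)$, so the required inequality holds with equality — this is precisely the statement recorded in Example~\ref{ex:quadratic}.

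With all hypotheses of Theorem~\ref{thm:Comparison} in place, its conclusion furnishes the Harnack inequality \eqref{eq:harnack-comparison} for positive solutions $u$ of \eqref{eq:schroe-neumann-rd} with the original potential $V$, in which $\omega = \omega_1$ is the minimiser of the energy \eqref{eq:repara-energy} built from $V$. Substituting $\beta(s)/\beta(t) = (\sinh(2Cs)/\sinh(2Ct))^{d/2}$ into \eqref{eq:harnack-comparison} yields the claimed inequality. There is no real obstacle to overcome here: the corollary is a direct specialisation of Theorem~\ref{thm:Comparison}, and the entire point is that Example~\ref{ex:quadratic} has already identified an optimal pair $(A,\beta)$ against which any potential with bounded Laplacian may be compared.
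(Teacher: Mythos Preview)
Your proposal is correct and follows exactly the approach the paper takes: the corollary is stated immediately after the sentence ``Applying Theorem~\ref{thm:Comparison} with the quadratic potential $V_2 = C^2|x|^2$\ldots'' and is intended as a direct specialisation of that theorem, with the pair $(A,\beta)$ imported from Example~\ref{ex:quadratic}. Your explicit verification of the integral identity is accurate and simply fills in details the paper leaves implicit.
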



\section{The manifold case}\label{sec:schroedinger}
In Theorem \ref{thm:Euclidean-Schroedinger-full} and Theorem \ref{thm:Euclidean-schrödinger}, we assume that the function $\omega$ admits enough regularity to satisfy the inequalities \eqref{eq:schroe-omega-ineq-1}---\eqref{eq:V-convexity} pointwise. This assumption is not unreasonable to make, since as outlined in Section \ref{sec:optimal-omega}, it is possible to find such a regular function $\omega$ for many important potentials $V$. However, for general Riemannian manifolds $M$, these regularity properties of $\omega$ might no longer be true. In fact, the function $\omega$ given by \eqref{eq:optimal-omega-def} might be merely Lipschitz in $x$ and $y$ (see \cite{LiYau1986}). To bypass this issue in our proofs, we work directly with the energy $E$ and analyse the function $Z$ under smooth variations of curves connecting points $x$ and $y$ in $M$. 

\medskip

As in Section \ref{sec:euclidean}, we start by considering the case, where $M$ is compact.

\begin{proof}[Proof of Theorem \ref{thm:Manifold-Schroedinger}]
    Let $u$ be a positive solution of \eqref{eq:schroe-neumann-manifold} and set $v=\log u$. Then $v$ satisfies 
\begin{equation*}\label{eq:schroedinger-v}
   \begin{cases}
        \partial_t v = \Delta v + |\nabla v|^2 -V\quad &\text{in $M \times (0,\infty)$,} \\
        \nabla v \cdot \nu = 0 \quad &\text{on $\partial M \times (0,\infty)$.}
    \end{cases} 
\end{equation*}
Fix $\varepsilon > 0$ and define $Z$  by
\[
Z(x,y;t,s):= v(x,t) - v(y,s) + \log \left( \frac{\beta(t)}{\beta(s)} \right) + \omega(x,y;t,s) + \frac{\varepsilon}{2}(t-s)^2 + \varepsilon.
\]
As in the proof of Theorem \ref{thm:Euclidean-schrödinger}, we note that $Z>0$ as $s \to 0^+$. Next, considering the behaviour of $Z$ as $t\to s^+$, if $x \ne y$, we can demonstrate that $\omega \to \infty$ as $t \to s^+$, which guarantees that $Z>0$ for $t$ near $s$. Indeed, if $\gamma_0$ minimises $E[\gamma;t,s]$, then assuming $V \geq -\alpha$ in $M$ for some $\alpha>0$, it follows that 
\begin{align*}
\omega(x,y;t,s) = E[\gamma_0,t,s] &= \frac{1}{4(t-s)}\int_0 ^1 |\dot{\gamma}_0|^2 \dd \tau + (t-s) \int_0^1 V(\gamma_0(\tau)) \dd \tau \\&\geq \frac{1}{4(t-s)}\int_0 ^1 |\dot{\gamma}_0|^2 \dd \tau - \alpha(t-s).
\end{align*}
Now, the Cauchy-Schwarz inequality implies that \[
L(\gamma_0)^2 = \left(\int_0 ^1 |\dot{\gamma}_0| \dd \tau\right)^2 \leq \int_0 ^1 |\dot{\gamma}_0|^2 \dd \tau.
\]
Therefore, \begin{equation} \label{eq:omega lower bound}
    \omega(x,y;t,s) \geq \frac{L(\gamma_0)^2}{4(t-s)} - \alpha(t-s)\geq \frac{d^2(x,y)}{4(t-s)}- \alpha(t-s).
\end{equation}
Since we currently assume $d(x,y)>0$, this inequality implies $\omega \to \infty$ as $t \to s^+$ as desired. Otherwise, if we assume $x=y$, we still obtain 
\[
\lim_{t \to s^+} \omega(x,x;t,s) \geq 0.
\]
Then as in the proof of Theorem \ref{thm:Euclidean-schrödinger}, we conclude that $Z>0$ for $t$ near $s$. 

We again proceed as in our earlier proof, by supposing $P_0=(x_0,y_0;t_0,s_0)$ is such that $Z(P_0)=0$ for $\rho_0 = t_0^2 + s_0^2$ taken minimally. For now, we assume that $x_0, y_0 \not\in \partial M$.

Next, we define a new function $\tilde{Z}: \C^{\infty}([0,1], M) \times S \to \R$ by setting
\[
\tilde{Z}(\gamma;t,s)= v(\gamma(1),t)-v(\gamma(0),s) + \log \left( \frac{\beta(t)}{\beta(s)} \right) + E[\gamma;t,s] + \frac{\varepsilon}{2}(t-s)^2 + \varepsilon,
\]
which we analyse under smooth variations of $\gamma$. Clearly, $Z$ and $\tilde{Z}$ are related by the inequality 
\[
\tilde{Z}(\gamma;t,s) \geq Z(\gamma(1),\gamma(0);t,s),
\]
and equality holds if $\gamma$ minimises $E[\gamma;t,s]$ over the set of all paths connecting $\gamma(0)$ and $\gamma(1)$ fixed. Thus, we can conclude that if $Z$ has a minimum at $(x_0,y_0;t_0,s_0)$, then $\tilde{Z}$ has a minimum at $(\gamma_0,t_0,s_0)$, where $\gamma_0$ is such that $E[\gamma_0;t_0,s_0] \leq E[\gamma;t_0,s_0]$ for all $\gamma \in \Gamma_{x,y}$. From this, we can conclude that 
\begin{align}
    \partial_t \tilde{Z}(\gamma_0;t_0,s_0) & \leq 0 \label{eq:manifold-t}\\
    \partial_s \tilde{Z}(\gamma_0;t_0,s_0) & \leq 0 \label{eq:manifold-s}\\
    \left.\frac{\partial}{\partial r}\tilde{Z}(\gamma(\tau,r);t_0,s_0)\right|_{r=0} & = 0\label{eq:manifold-grad}\\
    \left.\frac{\partial^2}{\partial r^2}\tilde{Z}(\gamma(\tau,r);t_0,s_0)\right|_{r=0} & \geq 0\label{eq:manifold-hess}
\end{align}
for all smooth variations $\gamma=\gamma(\tau,r)$ of $\gamma_0$, with $r\in (-\delta,\delta)$ for some $\delta > 0$ and $\gamma(\tau,0)=\gamma_0(\tau)$. While interpreting these conditions, we encounter the derivatives $\frac{d}{dr} E$ and $\frac{d^{2}}{dr^{2}} E$
of $E[\gamma;t,s]$ under the variation of $\gamma_0$. For later use, we note that 
\begin{align}
\begin{split}
    \frac{d}{dr} E[\gamma(\tau,r);t,s] &= \frac{1}{2(t-s)}\bigg( g(\gamma_{r},\dot{\gamma})|_{0}^1\\&\hspace{1cm}-\int _{0}^1 g(\nabla_{\tau}\dot{\gamma}-2(t-s)^{2}DV(\gamma),\gamma_{r}) \dd \tau  \bigg)
\end{split}
    \label{eq:first-variation}\\\nonumber\\
\begin{split}
    \frac{d^{2}}{dr^{2}} E[\gamma(\tau,r);t,s] &= \frac{1}{2(t-s)} \int_{0}^1 R(\gamma_{r},\dot{\gamma},\gamma_{r},\dot{\gamma}) \dd \tau \\& \hspace{1cm}+ \frac{1}{2(t-s)} \int_{0}^1g(\nabla_{\tau}\nabla_{r}\gamma_{r},\dot{\gamma})+|\nabla_{r}\dot{\gamma}|^{2} \dd \tau \\&\hspace{1cm}+(t-s)\int _{0}^1 g(D^{2}V(\gamma)\gamma_{r},\gamma_{r}) + g(DV(\gamma),\gamma_{rr}) \dd \tau
\end{split}\label{eq:second-variation}
\end{align}

Using \eqref{eq:manifold-grad} and \eqref{eq:first-variation}, we obtain that 
\[
\begin{split}
    g(\nabla v(x_0,t_0),&\gamma_r(1))-g(\nabla v(y_0,s_0),\gamma_r(0))\\&= \frac{-1}{2(t_0-s_0)}(g(\gamma_r(1),\dot{\gamma}_0(1))-g(\gamma_r(0),\dot{\gamma}_0(0))).
\end{split}
\]
for all smooth variations $\gamma(\tau,r)$ of $\gamma_0$. By considering variations with $\gamma_r(1)=0$, and then with $\gamma_r(0)=0$, it follows that
\begin{equation}
\nabla v(x_0,t_0) = \frac{-1}{2(t_0-s_0)}\dot{\gamma}_0(1)\quad  \text{and} \quad \nabla v(y_0,s_0) = \frac{-1}{2(t_0-s_0)}\dot{\gamma}_0(0).\label{eq:v-grad}
\end{equation}

Next, \eqref{eq:manifold-t} implies
\begin{align*}
    0 &\geq \partial_tv(x_0,t_0) + \frac{\beta'(t)}{\beta(t)}+ \frac{\partial}{\partial t}E[\gamma_0;t_0,s_0] + \varepsilon(t_0-s_0)\\
    &= \Delta v(x_0,t_0) + |\nabla v(x_0,t_0)|^2 -V(x_0) + \frac{\beta'(t)}{\beta(t)} + \frac{\partial}{\partial t}E[\gamma_0;t_0,s_0] + \varepsilon(t_0-s_0)
\end{align*}
However, using \eqref{eq:v-grad} and computing $\frac{\partial}{\partial t}E[\gamma_0;t_0,s_0]$ as in \cite{LiYau1986}, we see that \[\frac{\partial}{\partial t}E[\gamma_0;t_0,s_0] = - \frac{|\dot{\gamma}_0(1)|^2}{4(t_0-s_0)^2}+ V(x)= -|\nabla v(x_0,t_0)|^2+ V(x).\] Therefore, 
\begin{equation} \label{eq:v-laplacian1}
    \Delta v(x_0,t_0) \leq - \frac{\beta'(t)}{\beta(t)}-\varepsilon(t_0-s_0).
\end{equation}
By a similar argument, \eqref{eq:manifold-s} implies 
\begin{equation} \label{eq:v-laplacian2}
    -\Delta v(y_0,s_0) \leq \frac{\beta'(s)}{\beta(s)}+\varepsilon(t_0-s_0).
\end{equation}

We now wish to make use of our last condition \eqref{eq:manifold-hess}. Choose an orthonormal basis $(e_i(0))_{i=1}^d$ of $T_{y_0}M$ and parallel transport it along $\gamma_0$ to obtain an orthonormal basis $e_i:= e_i (\tau)$ at each point on the curve. We then define a particular variation of $\gamma_0$ by setting
\[
\gamma(\tau,r) = \exp_{\gamma_0(\tau)}(A(\tau t_0 + (1-\tau)s_0)r e_i).
\]
We note that $\gamma_r(1,r)=A(t_0) e_i$, $\gamma_r(0,r)=A(s_0) e_i$, and $\gamma_{rr}(\tau,r)=0$ for all $r \in (-\delta,\delta)$. Then, by computing $\left.\frac{\partial^2}{\partial r^2}\tilde{Z}(\gamma(\tau,r);t_0,s_0)\right|_{r=0}$ explicitly, we find that 
\begin{equation}\label{eq:sub-in-hessian}
    0 \leq A(t_0)^2 \nabla_i \nabla_i v(x_0,t_0) - A(s_0)^2 \nabla_i \nabla_i v(y_0,s_0) + \left.\frac{\partial^2}{\partial r^2}E[\gamma(\tau,r);t_0,s_0]\right|_{r=0}.
\end{equation}

Applying the formula \eqref{eq:second-variation}, we get that 
\begin{align*}
&\hspace{-1cm}\left.\frac{\partial^2}{\partial r^2}E[\gamma(\tau,r);t_0,s_0]\right|_{r=0} \\&= \frac{1}{2(t_0-s_0)}\bigg( \int_0^1 |(t_0-s_0) A'(\tau t_0 + (1-\tau)s_0) e_i|^2 \\& \hspace{1cm}+ A(\tau t_0 + (1-\tau)s_0)^2 R(e_i,\dot{\gamma}_0,e_i,\dot{\gamma}_0) \dd \tau\bigg) \\& \hspace{1cm}+ (t_0 -s_0) \int_0^1 A(\tau t_0 + (1-\tau)s_0)^2 \nabla_i \nabla_i V(\gamma_0) \dd \tau\\
&= \frac{1}{2}(t_0-s_0)\int_0^1 A'(\tau t_0 + (1-\tau)s_0)^2 \dd \tau  \\&\hspace{1cm}- \frac{1}{2(t_0-s_0)} \int_0^1 A(\tau t_0 + (1-\tau)s_0)^2 R(e_i,\dot{\gamma}_0,\dot{\gamma}_0,e_i) \dd \tau \\& \hspace{1cm}+ (t_0 -s_0) \int_0^1 A(\tau t_0 + (1-\tau)s_0)^2 \nabla_i \nabla_i V(\gamma_0) \dd \tau
\end{align*}
Inserting this back into \eqref{eq:sub-in-hessian} and summing over $1 \leq i \leq d$, we have that 
\[
\begin{split}
  0 \leq A(t_0)^2 \Delta v(x_0,t_0) - A(s_0)^2 \Delta v(y_0,s_0) + \frac{d}{2}(t_0-s_0)\int_0^1 A'(\tau t_0 + (1-\tau)s_0)^2 \dd \tau \\ - \frac{1}{2(t_0-s_0)} \int_0 ^1 A(\tau t_0 + (1-\tau)s_0)^2 \Ric(\dot{\gamma}_0,\dot{\gamma}_0) \dd \tau \\ + (t_0 -s_0) \int_0^1 A(\tau t_0 + (1-\tau)s_0)^2 \Delta V(\gamma_0) \dd \tau.  
\end{split}
\]
But by assumption, $\Ric(\dot{\gamma}_0,\dot{\gamma}_0) \geq 0$, so this reduces to 
\begin{equation}\label{eq:comparison-manifold}
\begin{split}
  0 \leq A(t_0)^2 \Delta v(x_0,t_0) - A(s_0)^2 \Delta v(y_0,s_0) + \frac{d}{2}(t_0-s_0)\int_0^1 A'(\tau t_0 + (1-\tau)s_0)^2 \dd \tau \\ + (t_0 -s_0) \int_0^1 A(\tau t_0 + (1-\tau)s_0)^2 \Delta V(\gamma_0) \dd \tau.   
\end{split}
\end{equation}
Inserting the inequalities \eqref{eq:v-laplacian1}  \eqref{eq:v-laplacian2}, and \eqref{eq:second-order-assumption-manifold} into \eqref{eq:comparison-manifold} yields 
\[
0 \leq -\varepsilon(t_0-s_0)(A(t_0)^2 - A(s_0)^2) < 0.
\]
This is a contradiction since $A$ is assumed to be strictly increasing. If $\partial M = \emptyset$, then this concludes the proof. Otherwise, at least one of the points $x_0,y_0$ needs to lie on the boundary $\partial M$. 

Suppose $y_0 \in \partial M$. Let $e(0):=-\nu(y_0) \in T_{y_0}M$ and parallel transport $e(0)$ along $\gamma_0$, so that $e(\tau) \in T_{\gamma_0(\tau)}M$. We choose a variation $\gamma:[0,1] \times [0,\delta)\to M$ of $\gamma_0$ defined by 
\begin{equation}\label{eq:variation}
\gamma(\tau,r) = \exp_{\gamma_0(\tau)}(r(1-\tau)e(\tau)).
\end{equation}
Then $\gamma_r(\tau,r) = (1-\tau)e(\tau)$ for all $r \in [0,\delta)$. In particular, $\gamma_r(0,r)=-\nu(y_0)$ and $\gamma_r(1,r)=0$. With the help of \eqref{eq:first-variation}, we see that 
\begin{align*}
    \left.\frac{\partial}{\partial r} \tilde{Z}(\gamma(\tau,r);t_0,s_0)\right|_{r=0} &= g(\nabla v(x_0,t_0),\gamma_r(1,0)) - g(\nabla v(y_0,s_0),\gamma_r(0,0)) \\ & \qquad + \frac{1}{2(t_0-s_0)}(g(\gamma_r(1,0),\dot{\gamma}_0(1))-g(\gamma_r(0,0),\dot{\gamma}_0(0)))\\
    &= g(\nabla v(y_0,s_0),\nu(y_0)) + \frac{g(\nu(y_0),\dot{\gamma}_0(0)}{2(t_0-s_0)}\\
    &= \frac{g(\nu(y_0),\dot{\gamma}_0(0))}{2(t_0-s_0)},
\end{align*}
where the last equality follows as a consequence of the Neumann condition on $v$. The assumption that the boundary $\partial M$ is $V$-convex implies $g(\nu(y_0),\dot{\gamma}_0(0)) \leq 0$ and so \[\left.\frac{\partial}{\partial r} \tilde{Z}(\gamma(\tau,r));t_0,s_0)\right|_{r=0} \leq 0.\] 
If this inequality is strict, we may apply a similar argument as in the proof of Theorem \ref{thm:Euclidean-schrödinger} to reach a contradiction. Therefore, we must have equality, which can occur only if $g(\nu(y_0),\dot{\gamma}_0(0))=0$, that is, if $\dot{\gamma}_0(0) \in T_{y_0} \partial M$. Since the manifold was assumed to be $V$-convex, Remark \ref{rem:V-convex} implies this is only possible if $\dot{\gamma}_0(0)=0$. Therefore, for any variation of $\gamma_0$, not necessarily the one defined by \eqref{eq:variation}, we have 
\begin{align*}
    \left.\frac{\partial}{\partial r} \tilde{Z}(\gamma(\tau,r);t_0,s_0)\right|_{r=0} &= g(\nabla v(x_0,t_0),\gamma_r(1,0)) - g(\nabla v(y_0,s_0),\gamma_r(0,0)) \\ & \qquad + \frac{1}{2(t_0-s_0)}g(\gamma_r(1,0),\dot{\gamma}_0(1)).
\end{align*}
In particular, for variations $\gamma(\tau,r):[0,1] \times (-\delta,\delta) \to M$ with $\gamma_r(1,0)=0$ we have 
\[
    \left.\frac{\partial}{\partial r} \tilde{Z}(\gamma(\tau,r);t_0,s_0)\right|_{r=0} = - g(\nabla v(y_0,s_0),\gamma_r(0,0)).
\]
If we then consider choosing $\gamma_r(0,0) \in T_{y_0} \partial M$, then we have
\[
    0=\left.\frac{\partial}{\partial r} \tilde{Z}(\gamma(\tau,r);t_0,s_0)\right|_{r=0} = - g(\nabla v(y_0,s_0),\gamma_r(0,0)).
\]
Since the Neumann boundary condition implies $g(\nabla v(y_0,s_0),\gamma_r(0,0))=0$ for all $\gamma_r(0,0) \in (T_{y_0} \partial M)^{\perp}$, we conclude that \[g(\nabla v(y_0,s_0),\gamma_r(0,0))=0\]
for all choices of $\gamma_r(0,0) \in T_{y_0} M$ and therefore
\[\nabla v(y_0,s_0)=0 = \frac{-1}{2(t_0-s_0)}\dot{\gamma}_0(0)\]
holds as before. It follows that  
\[
\left.\frac{\partial}{\partial r} \tilde{Z}(\gamma(\tau,r);t_0,s_0)\right|_{r=0} = g(\nabla v(x_0,t_0),\gamma_r(1,0)) + \frac{1}{2(t_0-s_0)}g(\gamma_r(1,0),\dot{\gamma}_0(1))
\]
for any variation of $\gamma_0$. 

If $x_0 \not\in \partial M$, then we may consider variations $\gamma(\tau,r):[0,1] \times (-\delta,\delta) \to M$ with any choice of $\gamma_r(1,0) \in T_{x_0}M$. We then have 
\[
0=\left.\frac{\partial}{\partial r} \tilde{Z}(\gamma(\tau,r);t_0,s_0)\right|_{r=0} = g(\nabla v(x_0,t_0),\gamma_r(1,0))+ \frac{1}{2(t_0-s_0)}g(\gamma_r(1,0),\dot{\gamma}_0(1))
\]
and therefore \[
\nabla v(x_0,t_0) = \frac{-1}{2(t_0-s_0)}\dot{\gamma}_0(1)
\]
holds as before. If $x_0 \in \partial M$, by repeating the argument for $y_0$, it must be that $\dot{\gamma}_0(1)=0$ and $\nabla v(x_0,t_0)=0$ and again we recover the above identity. From this point on, we can continue to follow the argument as in the case that $x_0,y_0 \not \in \partial M$ to again reach a contradiction.
\end{proof}

\begin{proof}[Proof of Theorem \ref{thm:Manifold-Schroedinger-full}]
Since $M$ is assumed to be $V$-approximable, we may assume there exists a sequence $(M_n)_{n \geq 1}$ of $V$-convex domains approximating $M$. From here, the argument follows precisely as in the proof of Theorem \ref{thm:Euclidean-Schroedinger-full} with $\R^d$ replaced by $M$ and $\Omega_n$ replaced by $M_n$.
\end{proof}

\begin{remark}\label{rem:manifold-comparison}
In light of \eqref{eq:comparison-manifold}, a comparison result similar to Theorem \ref{thm:Comparison} applies in the manifold setting as well. Suppose that for two potential functions $V_1,V_2 \in C^2(M)$ bounded from below, one has that $\Delta V_1 \leq \Delta V_2$, and suitable functions $A$, $\beta$ making \eqref{eq:second-order-assumption-manifold} true for $V_2$ are known. Then choosing the same $A$ and $\beta$ makes \eqref{eq:second-order-assumption-manifold} true for $V_1$ as well. In particular, for any potential $V$ such that $\Delta V \leq 2dC^2$, we may once again make a comparison with the quadratic potential $V=C^2|x|^2$ on $M=\R^d$ and choose $A$ and $\beta$ by \[
A(t) = \sinh(2Ct), \qquad \text{and} \qquad \beta(t) = \sinh^{d/2}(2Ct).\]
\end{remark}

\section{Heat equation with gradient drift and potential}\label{sec:grad-drift}
Let $M$ be a complete Riemannian manifold and $f\in C^4(M)$. We consider positive solutions of the equation \begin{equation}\tag{\ref{eq:heat-grad-drift-potential-full}}
\partial_t u = \Delta u -2\nabla f \cdot \nabla u - Vu \qquad \text{in $M\times (0,\infty)$}.
\end{equation}
Under the change of variables $v:=e^{-f}u$, \eqref{eq:heat-grad-drift-potential-full} is transformed into the Schrödinger equation
\begin{equation}\label{eq:drift-transformed}
    \partial_t v = \Delta v - \tilde{V}v \qquad \text{in $M \times (0,\infty)$}
\end{equation}
for the potential $\tilde{V}:=|\nabla f|^2 - \Delta f + V$. Therefore, if the conditions of Theorem \ref{thm:Manifold-Schroedinger-full} are satisfied for the potential $\tilde{V}$, we may conclude that every positive solution $v$ of \eqref{eq:drift-transformed} satisfies the Harnack inequality 
\begin{equation*}\label{eq:drift-transformed-harnack}
v(x,t) \geq v(y,s) \frac{\beta(s)}{\beta(t)} e^{-\tilde{\omega}(x,y;t,s)},
\end{equation*}
where \[
\tilde{\omega}(x,y;t,s):= \min_{\Gamma_{x,y}} \frac{1}{4(t-s)}\int_0 ^1 |\dot{\gamma}|^2 \dd \tau + (t-s) \int_0^1 (|\nabla f|^2 - \Delta f + V)(\gamma(\tau)) \dd \tau.
\]
Reversing the transformation, this leads to the Harnack inequality 
\begin{equation}\tag{\ref{eq:drift-transformed-harnack-og}}
u(x,t) \geq u(y,s) \frac{\beta(s)}{\beta(t)} e^{f(x)-f(y)-\tilde{\omega}(x,y;t,s)}
\end{equation}
satisfied by every positive solution $u$ of the original equation \eqref{eq:heat-grad-drift-potential-full}, which is Corollary \ref{cor:drift-full}. Corollary \ref{cor:drift} can be obtained similarly by applying Theorem \ref{thm:Manifold-Schroedinger}.

\begin{example}[Ornstein-Uhlenbeck with quadratic potential] For $M=\R^d$, we consider when $V=C_1^2|x|^2$ and $f=-\frac{C_2}{2}|x|^2$ for $C_1,C_2 \in \R$. These choices correspond to the equation
\[
\partial_t u = \Delta u + 2C_2x\cdot\nabla u - C_1^2|x|^2 u,
\]
which under the transformation $v=e^{-\frac{C_1}{4}|x|^2}$ becomes 
\[
\partial_t v = \Delta v - (C^2|x|^2 + dC_2)v
\]
for $C$ such that $C^2 = C_1^2 + C_2^2$. From Example \ref{ex:quadratic}, it follows that a positive solution $v$ of this equation satisfies 
\[
  v(x,t) \geq v(y,s)\left(\frac{\sinh(2Cs)}{\sinh(2Ct)}\right)^{d/2}e^{-\tilde{\omega}(x,y;t,s)} 
\]
for all $x,y \in \R^d$ and $0<s<t$ with 
\[
\tilde{\omega} (x,y;t,s) = \frac{C}{2}\left(\frac{|x-y|^2}{\sinh(2C(t-s))}+(|x|^2 + |y|^2)\tanh(C(t-s))\right) + dC_2(t-s).
\]
Reversing the transformation, we have that $u$ satisfies 
\[
  u(x,t) \geq u(y,s)\left(\frac{\sinh(2Cs)}{\sinh(2Ct)}\right)^{d/2}e^{\frac{C_2}{2} (|x|^2-|y|^2)-\tilde{\omega}(x,y;t,s)}. 
\]
One may compare this inequality with the results obtained in \cite{Negro}.
\end{example}

\section{Differential Harnack inequalities}\label{sec:diff-harnack}
Suppose $u$ is a positive function defined on a Riemannian manifold $M$ satisfying the Harnack inequality
\begin{equation*}
      u(x,t) \geq u(y,s) \frac{\beta(s)}{\beta(t)} e^{-\omega(x,y;t,s)}
\end{equation*} for a given function $\beta:[0,\infty) \to \beta:[0,\infty)$ and $\omega$ defined as in Theorem \ref{thm:Manifold-Schroedinger-full}. Then, after making the change of variables $v=\log u$, we have that \[
v(x,t) - v(y,s) + \log(\beta(t)) - \log(\beta(s)) + \frac{1}{4}\int_{s}^t |\dot\gamma|^2 \dd \tau + \int_s^t V(\gamma(\tau)) \dd \tau \geq 0
\]
for all $x,y \in M$, $0<s<t$, where $\gamma$ is a $V$-geodesic connecting $x$ and $y$ in $M$. For $h > 0$ small and $\Vec{e} \in T_y M$ to be determined later, set $x= \exp_y(h \Vec{e})$ and $t=s+h$. We write $\gamma_h:[s,s+h] \to M$ to represent the $V$-geodesic with $\gamma_h(s) = y$ and $\gamma_h(s+h) = x = \exp_y(h \Vec{e})$.
Then 
\[
\begin{split}
\frac{v(\gamma_h(s+h),s+h) - v(y,s)}{h} + \frac{\log(\beta(s+h)) - \log(\beta(s))}{h}& \\ + \frac{1}{4h}\int_{s}^{s+h} |\dot\gamma_h|^2 \dd \tau + \frac{1}{h}\int_s^{s+h} V(\gamma_h(\tau)) \dd \tau &\geq 0.
\end{split}
\]
Passing to the limit as $h \to 0^+$ yields
\[
\nabla v(y,s) \cdot \Vec{e} + \partial_s v(y,s) + \frac{\beta'(s)}{\beta(s)} + \frac{1}{4}|\Vec{e}|^2 + V(y) \geq 0.
\]
Writing $\frac{1}{4}|\Vec{e}|^2 +\nabla v(y,s) \cdot \Vec{e} = |\nabla v(y,s) + \frac{1}{2}\Vec{e}|^2 - |\nabla v(y,s)|^2$ and then choosing $\Vec{e} = -2\nabla v(y,s)$, we have that 
\[
\partial_s v(y,s) + \frac{\beta'(s)}{\beta(s)} - |\nabla v(y,s)|^2 + V(y) \geq 0,
\]
or equivalently if $u$ solves \eqref{eq:schroedinger}, \[
\Delta (\log u) \geq -\frac{\beta'(s)}{\beta(s)}
\]
in $M \times (0, \infty)$. Alternatively, if $u=e^{-f}\hat{u}$, where $\hat{u}$ solves \eqref{eq:heat-grad-drift-potential-full}, then 
\[
\Delta (\log \hat{u}) \geq \Delta f-\frac{\beta'(s)}{\beta(s)}
\]
in $M \times (0, \infty)$.

\bibliographystyle{plain}

\begin{thebibliography}{10}

\bibitem{Aronson-Benilan-1979}
D.~G. Aronson and P.~B{\'e}nilan.
\newblock R{\'e}gularit{\'e} des solutions de l'{\'e}quation des milieux poreux
  dans $\mathbb{R}^n$.
\newblock {\em C. R. Acad. Sei. Paris S{\'e}r. I Math.}, 288:103--105, 1979.

\bibitem{Auchmuty-Bao-1994}
G.~Auchmuty and D.~Bao.
\newblock Harnack-type inequalities for evolution equations.
\newblock {\em Proc. Amer. Math. Soc.}, 122(1):117--129, 1994.

\bibitem{Bakry-2017}
D.~Bakry, F.~Bolley, and I.~Gentil.
\newblock The {L}i–{Y}au inequality and applications under a
  curvature-dimension condition.
\newblock {\em Ann. Inst. Fourier}, 67(1):397--421, 2017.

\bibitem{Bakry-Qian-1999}
D.~Bakry and Z.~M. Qian.
\newblock Harnack inequalities on a manifold with positive or negative {R}icci
  curvature.
\newblock {\em Revista Matem\'atica Iberoamericana}, 15(1):143--179, 1999.

\bibitem{Carmona-Simon}
R.~Carmona and B.~Simon.
\newblock Pointwise bounds on eigenfunctions and wave packets in $n$-body
  quantum systems.
\newblock {\em Commun. Math. Phys.}, 80:59--80, 1981.

\bibitem{Chow-Harnack-Gauss}
B.~Chow.
\newblock On {H}arnack's inequality and entropy for the {G}aussian curvature
  flow.
\newblock {\em Commun. Pure Appl. Math.}, 44:469--483, 1991.

\bibitem{Chow-Harnack-Yamabe}
B.~Chow.
\newblock The {Y}amabe flow on locally conformally flat manifolds with positive
  {R}icci curvature.
\newblock {\em Commun. Pure Appl. Math.}, 45:1003--1014, 1992.

\bibitem{Crandall-Pierre}
M.~G. Crandall and M.~Pierre.
\newblock Regularizing effects for $u_t = {\Delta} \varphi (u)$.
\newblock {\em Trans. Am. Math. Soc}, 274(1):159--168, 1982.

\bibitem{Davies-Heat-Kernels}
E.~B. Davies.
\newblock {\em Heat kernels and spectral theory}.
\newblock Cambridge University Press, 1989.

\bibitem{DiBenedetto-Harnack-Book}
E.~DiBenedetto, U.~Gianazza, and V.~Vespri.
\newblock {\em Harnack's inequality for degenerate and singular parabolic
  equations}.
\newblock Springer, 2012.

\bibitem{Ding-2021}
M.~Ding, C.~Zhang, and S.~Zhou.
\newblock Local boundedness and {H}ölder continuity for the parabolic
  fractional p-laplace equations.
\newblock {\em Calc. Var.}, 60(38):1--45, 2021.

\bibitem{Esteban-Vazquez-1988}
J.~R. Esteban and J.~L. V\'azquez.
\newblock Homogeneous diffusion in $\mathbb{R}$ with power-like nonlinear
  diffusivity.
\newblock {\em Archive for Rational Mechanics and Analysis}, 103:39--80, 1988.

\bibitem{Esteban-Vasquez-1990}
J.~R. Esteban and J.~L. V\'azquez.
\newblock Régularité des solutions positives de l'équation parabolique
  $p$-laplacienne.
\newblock {\em C. R. Acad. Sci. Paris}, 310(1):105--110, 1990.

\bibitem{friedman-parabolic}
A.~Friedman.
\newblock {\em Partial differential equations of parabolic type}.
\newblock Dover Books, 1964.

\bibitem{Garain-Kinnunen-2023}
P.~Garain and J.~Kinnunen.
\newblock Weak {H}arnack inequality for a mixed local and nonlocal parabolic
  equation.
\newblock {\em J. Differ. Equ.}, 360:373--406, 2023.

\bibitem{Grigoryan-1991}
A.~A. Grigor'yan.
\newblock The heat equation on noncompact {R}iemannian manifolds.
\newblock {\em Mat. Sb.}, 72(1):55--87, 1991.

\bibitem{Hadamard1954}
J.~Hadamard.
\newblock Extension \`a l'\'{e}quation de la chaleur d'un th\'{e}or\`eme de
  {A}. {H}arnack.
\newblock {\em Rend. Circ. Mat. Palermo (2)}, 3:337--346, 1954.

\bibitem{Hamilton-Ricci-Harnack}
R.~S. Hamilton.
\newblock The {H}arnack estimate for the {R}icci flow.
\newblock {\em J. Differential Geometry}, 37:225--243, 1993.

\bibitem{Hamilton-1993}
R.S. Hamilton.
\newblock A matrix {H}arnack estimate for the heat equation.
\newblock {\em Commun. Anal. Geom.}, 1(1):113--126, 1993.

\bibitem{Hamilton-Mean-Harnack}
R.S. Hamilton.
\newblock {H}arnack estimate for the mean curvature flow.
\newblock {\em J. Differential Geometry}, 41:215--226, 1995.

\bibitem{Huang-Huang-Li}
G.~Huang, Z.~Huang, and H.~Li.
\newblock Gradient estimates for the porous medium equations on {R}iemannian
  manifolds.
\newblock {\em J. Geom. Anal.}, 23:1851--1875, 2013.

\bibitem{Kassmann-Harnack-survey}
M.~Kassmann.
\newblock Harnack inequalities: An introduction.
\newblock {\em Boundary Value Problems}, 081415, 2007.

\bibitem{Kassmann-Weidner-2024-2}
M.~Kassmann and M.~Weidner.
\newblock The {H}arnack inequality fails for nonlocal kinetic equations.
\newblock {\em Advances in Mathematics}, 459, 2024.

\bibitem{Kassmann-Weidner-2024}
M.~Kassmann and M.~Weidner.
\newblock The parabolic {H}arnack inequality for nonlocal equations.
\newblock {\em Duke Math. J.}, 173(17):3413--3451, 2024.

\bibitem{kato}
T.~Kato.
\newblock {\em Perturbation theory for linear operators}.
\newblock Springer, 1980.

\bibitem{Kinnunen-metric-measure}
J.~Kinnunen, N.~Marola, M.~Miranda Jr., and F.~Paronetto.
\newblock Harnack's inequality for parabolic {D}e {G}iorgi classes in metric
  spaces.
\newblock {\em Adv. Differential Equations}, 17:801--832, 2012.

\bibitem{Kotschwar-Ni}
B.~Kotschwar and L.~Ni.
\newblock Local gradient estimates of $p$-harmonic functions, $1/h$-flow, and
  an entropy formula.
\newblock {\em Ann. Scient. Éc. Norm. Sup}, 42(1):1--36, 2009.

\bibitem{Krylov-Safonov-1981}
N.~V. Krylov and M.~V. Safonov.
\newblock A certain property of solutions of parabolic equations with
  measurable coefficients.
\newblock {\em Mathematics of the USSR. Izvestiya}, 16(1):151--164, 1981.

\bibitem{Lemaire-1977}
L.~Lemaire.
\newblock {\em On the existence of harmonic maps}.
\newblock {PhD} dissertation, University of Warwick, 1977.

\bibitem{li-1991}
J.~Li.
\newblock Gradient estimates and {H}arnack inequalities for nonlinear parabolic
  and nonlinear elliptic equations on {R}iemannian manifolds.
\newblock {\em J. Funct. Anal.}, 100:233--256, 1991.

\bibitem{Li-Xu}
J.~Li and X.~Xu.
\newblock Differential {H}arnack inequalities on {R}iemannian manifolds {I}:
  {L}inear heat equation.
\newblock {\em Adv. Math.}, 226:4456--4491, 2011.

\bibitem{LiYau1986}
P.~Li and S.~T. Yau.
\newblock On the parabolic kernel of the {S}chr\"odinger operator.
\newblock {\em Acta Math.}, 156:153--201, 1986.

\bibitem{Lu-Ni-Vazquez-Villani}
P.~Lu, L.~Ni, J.L. V\'azquez, and C.~Villani.
\newblock Local {A}ronson–{B}énilan estimates and entropy formulae for
  porous medium and fast diffusion equations on manifolds.
\newblock {\em J. Math. Pures Appl}, 19(1):1--19, 2009.

\bibitem{Ma-Li}
B.~Ma and J.~Li.
\newblock Gradient estimates of porous medium equations under the {R}icci flow.
\newblock {\em J. Geom}, 105:313--325, 2014.

\bibitem{Ma-Zhao-Song}
L.~Ma, L.~Zhao, and X.~Song.
\newblock Gradient estimate for the degenerate parabolic equation
  $u_t={\Delta}f(u) + h(u)$.
\newblock {\em J. Differential Equations}, 244:1157--1177, 2008.

\bibitem{Moser1964}
J.~Moser.
\newblock A {H}arnack inequality for parabolic differential equations.
\newblock {\em Commun. Pure Appl. Math}, 17:101--134, 1964.

\bibitem{Negro}
L.~Negro.
\newblock Harnack inequality for {O}rnstein–{U}hlenbeck type operators.
\newblock {\em Arch. Math.}, 114:331--341, 2020.

\bibitem{Pini1954}
B.~Pini.
\newblock Sulla soluzione generalizzata di {W}iener per il primo problema di
  valori al contorno nel caso parabolico.
\newblock {\em Rend. Sem. Mat. Univ. Padova}, 23:422--434, 1954.

\bibitem{Raich-Tinker-2016}
A.~Raich and M.~Tinker.
\newblock Schrödinger operators with ${A}_\infty$ potentials.
\newblock {\em Potential Analysis}, 45:387--402, 2016.

\bibitem{Ruan}
Q.~Ruan.
\newblock Gradient estimate for {S}chrödinger operators on manifolds.
\newblock {\em J. Geom. Phys.}, 58:962--966, 2008.

\bibitem{Saloff-Coste-1992}
L.~Saloff-Coste.
\newblock A note on {P}oincaré, {S}obolev and {H}arnack inequalities.
\newblock {\em Internat. Math. Res. Notices}, 2:27--38, 1992.

\bibitem{Simon-1983}
B.~Simon.
\newblock Instantons, double wells and large deviations.
\newblock {\em Bull. Amer. Math. Soc.}, 8:323--326, 1983.

\bibitem{Strömqvist}
M.~Strömqvist.
\newblock Harnack’s inequality for parabolic nonlocal equations.
\newblock {\em Ann. I. H. Poincaré}, 36:1709--1745, 2019.

\bibitem{Wang-2017}
W.~Wang.
\newblock {H}arnack differential inequalities for the parabolic equation $u_t =
  \mathcal{L}f(u)$ on {R}iemannian manifolds and applications.
\newblock {\em Acta Mathematica Sinica}, 33(5):620--634, 2017.

\bibitem{Wang-Chen-doubly-nonlinear}
Y.~Wang and W.~Chen.
\newblock Gradient estimates and entropy monotonicity formula for doubly
  nonlinear diffusion equations on {R}iemannian manifolds.
\newblock {\em Math. Meth. Appl. Sci.}, 37:2772--2781, 2014.

\bibitem{Yau-1994}
S.~T. Yau.
\newblock On the {H}arnack inequalities of partial differential equations.
\newblock {\em Commun. Anal. Geom.}, 2(3):431--450, 1994.

\bibitem{Yau-1995}
S.~T. Yau.
\newblock Harnack inequality for non-self-adjoint evolution equations.
\newblock {\em Mathematical Research Letters}, 2:387--399, 1995.

\end{thebibliography}

\end{document}